\documentclass[11pt]{article}
\usepackage[T1]{fontenc}
\usepackage{lmodern}
\usepackage{mathtools,microtype,needspace}
\usepackage{authblk}

\usepackage[hidelinks]{hyperref}
\usepackage{max2024}

\newtheorem{theorem}{Theorem}
\newtheorem{lemma}[theorem]{Lemma}

\newtheorem{corollary}[theorem]{Corollary}
\newtheorem{question}{Question}
\newcommand{\Cov}{\operatorname{Cov}}
\newcommand{\TV}{d_{\mathrm{TV}}}
\newcommand{\wsp}{\hspace{24pt}}
\newcommand{\imrsp}{\hsp\imr\hsp}
\renewcommand{\lq}{\prec}

\providecommand{\sd}{\operatorname{sd}}

\newcommand{\cpnorm}[1]{\left\lVert#1\right\rVert}

\title{Proof of the Kahn–Saks Conjecture}
\author{Max Aires\thanks{Department of Mathematics, University of Southern California}}

\begin{document}
\maketitle
\begin{abstract}
Let $\PP(x\lq y)$ be the probability that $x$ precedes $y$ in a uniformly random linear extension of an $n$-element poset $P$, and define the balancing coefficient to be $\de(x,y)=\min(\PP(x\lq y),\PP(y\lq x))$ with $\de(P)=\max_{x,y}\de(x,y)$. Building on work in~\cite{AK}, we prove (Theorem~\ref{cor:kahn-saks}) that sufficiently large width forces $\de(P)$ to be arbitrarily close to $1/2$, answering a long-standing conjecture of Kahn and Saks~\cite{KS}. In fact, we prove the stronger result (Theorem~\ref{thm:main}) that large width forces one of two configurations in our poset: either a nearly uniform order on $k$ vertices, or an almost fixed order on $t$ vertices with one further vertex inserted uniformly among the $t+1$ slots. We also show that, for fixed $k$, the first possibility must occur within any antichain $X$ of size $\Om(n^{2/3})$.
\end{abstract}

\section{Introduction}
Let $P$ be a partially ordered set; we shall abuse notation by also using $P$ for its ground set. A \em{linear extension} of $P$ is a linear order of the elements of $P$ which agrees with the poset relations. See~\cite{CP} for a good survey on linear extensions.

Much effort over the years has been dedicated to giving lower bounds on $\de(P)$, and in particular to proving the famed ``1/3-2/3 Conjecture'' that $\de(P)\ge 1/3$ whenever $P$ is not a chain, posed independently in~\cite{Kis68} and~\cite{Fre75}; see~\cite{KS},\cite{Lin},\cite{BFT},\cite{ACPP} for a (non-exhaustive) sampling of partial progress. Our focus here is instead on conditions which force $\de(P)$ to be close to $1/2$, i.e. that force the existence of a pair of elements which appear in either order with roughly equal probability.

Intuitively, such elements should almost always exist for a ``typical'' poset, and a variety of general, sufficient conditions have been proposed. Most notably, in 1984, Kahn and Saks~\cite{KS} conjectured that, if $w(P)$ is the width of $P$,
$$w(P)\to\oo\imrsp\de(P)\to1/2.$$
Here and below all limits are along sequences of posets with $n=|P|\to\oo$.  Until recently, the best result in this direction was by Koml\'os, who proved~\cite{Komlos} in 1990 that $\de(P)\to 1/2$ under the stronger hypothesis that $|\min(P)|=\Om(n)$; in particular, by a result of Kleitman and Rothschild \cite{KR} this implies that $\de(P)\to 1/2$ asymptotically almost surely for a uniformly random poset on $[n]$ (see also \cite{Kor}). Aires and Kahn~\cite{AK} obtained $\de(P)\to1/2$ when $w(P)=\Om(n)$ or $|\min(P)|=\om(\log n)$. For further work, see~\cite{CPP} and~\cite{AK2}.

Our main result is a positive answer to this conjecture:
\begin{theorem}\label{cor:kahn-saks}
If $w(P)\to\oo$ then $\de(P)\to1/2$.
\end{theorem}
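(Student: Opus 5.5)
The plan is to derive Theorem~\ref{cor:kahn-saks} from the structural statement promised in the abstract (Theorem~\ref{thm:main}). We will take that statement in the following form. For every $k$ and $t$, if $w(P)$ is large enough, then one of two configurations occurs.
\begin{itemize}
\item[(a)] There are $k$ elements whose relative order in a uniform linear extension is within small total variation distance of uniform on $S_k$.
\item[(b)] There is a set of $t$ elements $y_1,\dots,y_t$ that appear, with probability close to $1$, in a fixed order. There is also a further element $x$ whose position among the $t+1$ slots determined by the $y_i$ is close to uniform.
\end{itemize}
Granting this, the corollary is a short deduction, and most of the work lies in proving Theorem~\ref{thm:main}.

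The deduction goes as follows. Fix $\ep>0$, and we aim to show $\de(P)\ge 1/2-\ep$ once $w(P)$ is large.
\begin{itemize}
\item[(a)] Already $k=2$ suffices: two elements whose order is nearly uniform give $\de(x,y)\ge 1/2-\ep$ directly.
\item[(b)] Take $t$ odd. Consider the middle element $y_m$ with $m=(t+1)/2$. Then $x\prec y_m$ exactly when $x$ lands in one of the first $m$ slots, up to the small error from the order of the $y_i$ not being exactly fixed. That event has probability about $m/(t+1)=1/2$. So $\de(x,y_m)\ge 1/2-\ep$ as well.
\end{itemize}
Letting $\ep\to0$ gives the theorem.

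For Theorem~\ref{thm:main} itself I would proceed in three steps.
\begin{enumerate}
\item Take a large antichain $A$ with $|A|=w(P)$. Using the machinery of~\cite{AK}, study the random positions of the elements of $A$ in a uniform linear extension. Log-concavity (Stanley's inequality) of the position distributions is the natural tool, together with the associated correlation and covariance bounds.
\item Split into cases according to how the elements of $A$ are spread out. If many elements of $A$ have heavily overlapping position distributions, with comparable spread, then a Koml\'os/\cite{AK}-type argument shows that their relative order is nearly exchangeable. This yields (a).
\item Otherwise the positions are effectively separated. One then builds a near-chain $y_1,\dots,y_t$ of elements that are well ordered. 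An element $x$ whose position has much larger spread than the gaps between them is then inserted nearly uniformly, giving (b).
\end{enumerate}

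The main obstacle is the dichotomy in steps 2 and 3 when $w(P)$ is only slowly growing, i.e. $w=o(\log n)$. In that regime the known arguments, which need $w=\Om(n)$ or $|\min P|=\om(\log n)$, break down. The natural attack is an iterative or renormalization scheme that I would try first. Repeatedly pass to sub-antichains on which the position variances are comparable within a constant factor. At each scale, either find exchangeability or extract a chain level. Then use log-concavity to show that a wide-spread element must interleave uniformly with a well-ordered chain. Controlling the accumulated error across scales, uniformly in $n$, is where I expect the real difficulty.
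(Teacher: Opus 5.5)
Your deduction is correct and is exactly how the paper proves Theorem~\ref{cor:kahn-saks}: apply Theorem~\ref{thm:main} with $k=2$ and $t=1$, where $\I_1=\U_2$ makes either alternative a pair $x,y$ with $|\PP(x\prec y)-1/2|\le\ep$ (your middle-element argument for odd $t$ is the same observation, with $t=1$ its simplest case). Note, though, that Theorem~\ref{thm:main} carries all the substance and your sketch of it would not work as written. If $x$'s position relative to a nearly rigid $t$-chain spreads far beyond the chain's gaps, then for $t\ge2$ it lands in an extreme slot with probability close to $1$, and for $t=1$ your step~3 is just the original problem. The paper instead uses width to force a large scale $R(P)$ with small height gaps (via Haqi's inequality and the Koml\'os-based doubling lemma). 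It then inserts $x$ among near-rigid copies $z_i$ of a reference vertex $p$, with $d(z_i,p)$ small compared with $R(P)$, placed at the $i/(t+1)$-quantiles of $x$'s position relative to $p$.
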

We derive Theorem~\ref{cor:kahn-saks} as a corollary of Theorem~\ref{thm:main} below.

The difficulty in proving $\de(P)\to1/2$ lies in finding where the balanced elements are; even a slight perturbation to a poset may change the location of a balanced pair. Previous results on balancing, especially those on the 1/3-2/3 Conjecture, have focused on pairs with similar height statistics (see Section~2), but this approach appears to work only up to $\de(P)\ge 1/e-o(1)$. We give a simpler proof that large width implies this weaker lower bound in Section 6. 

It turns out the key to finding pairs with balancing coefficient near $1/2$ is to actually look for two quite different stronger configurations, which we describe below.

For a set $Z\se P$, let $\S_Z$ be the set of linear orders on $Z$, and let $\L_{P;Z}$ be the law of a uniform linear extension restricted to $Z$.
For two laws $\mu,\nu$ on $\S_Z$, put
$$\TV(\mu,\nu)=\frac12\sum_{\si\in \S_Z}|\mu(\si)-\nu(\si)|.$$
Let $\U_k$ be the uniform law on the $k!$ orders of a $k$-element set.
We call a $k$-element set $Z\se P$ an \em{$\ep$-pseudo antichain} if
$\TV(\L_{P;Z},\U_k)\le\ep$.

Similarly, let $\I_t$ be the law assigning mass $1/(t+1)$ to each of the orders $y_1\lq\cds\lq y_j\lq x\prec y_{j+1}\lq\cds\lq y_t$ for $0\le j\le t$; equivalently $\I_t$ is the uniform extension law of the disjoint union of a $t$-element chain with an isolated element. We call a $(t+1)$-element set
$Z\se P$ an \em{$\ep$-pseudo insertor} if it has a labelling
$Z=\{x,y_1,\ldots,y_t\}$ for which
$\TV(\L_{P;Z},\I_t)\le\ep$, identifying the labels of $\I_t$ with those
of $Z$.

The following is our main result, which has a Ramsey-theoretic flavor:
\begin{theorem}\label{thm:main}
For every fixed $k\ge2$, $t\ge1$, and $\ep>0$, there is a
$W_{k,t,\ep}$ such that every poset $P$ with $w(P)>W_{k,t,\ep}$
contains at least one of the following:
\bol
\it an $\ep$-pseudo antichain of size $k$;
\it an $\ep$-pseudo insertor of size $t+1$.
\eol
\end{theorem}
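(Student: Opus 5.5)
The plan is to start from a large antichain $A$ with $|A|=w(P)=w$ and study the joint law of the relative order of its elements in a uniform linear extension. For each $x\in A$ let $h(x)$ be its (random) height. The first step records a concentration/stability tool, in the spirit of~\cite{AK}. If two incomparable elements $x,y$ have height distributions that nearly coincide, then swapping $x$ and $y$ is nearly measure preserving, so $\PP(x\lq y)\approx 1/2$. More generally, a set of $k$ elements whose heights are pairwise ``interchangeable'' in this sense should be an $\ep$-pseudo antichain. To make this precise, I would use the Stanley/Kahn--Saks log-concavity of height sequences to show that the height of each $x\in A$ is a log-concave random variable. Such a variable is either spread over a window of length $L_x$ or essentially concentrated on a short interval.

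The main dichotomy then comes from the spreads $L_x$. First I would show that the elements of $A$ cannot all have large, mutually overlapping spread unless Ramsey-type clustering produces $k$ elements with nearly equal height laws. To do this, choose by pigeonhole many $x\in A$ whose median heights lie in a common window that is small relative to their spreads. Within such a group, the comparability-free structure plus log-concavity should let me couple the orders, e.g.\ via the transposition map $\sigma\mapsto(x\,y)\sigma$ restricted to extensions where the swap is legal. I would argue that the exceptional extensions have small mass, which gives near-uniformity on every $k$-subset. This yields alternative (i).

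Otherwise, the spreads are very uneven. In that case there is an element $x\in A$ whose height is spread over a long window, together with many elements $y\in A$ whose heights are tightly concentrated at distinct, well-separated positions across that window. Among these concentrated elements, pick $t$ with pairwise far-apart typical heights. They then appear in an almost fixed order $y_1\lq\cdots\lq y_t$ with probability $1-\ep/2$. If, moreover, I choose them so that the conditional position of $x$ among the $t+1$ gaps is nearly uniform, I get an $\ep$-pseudo insertor, which is alternative (ii). The uniformity among gaps would be arranged by choosing the $y_j$ at quantiles $j/(t+1)$ of the law of $h(x)$. I also need the joint law of $(h(x),h(y_1),\dots,h(y_t))$ to approximately factor, so that the quantile positions of $h(x)$ translate into gap probabilities; the concentration of the $h(y_j)$ handles this.

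The main obstacle is the middle regime: elements whose spreads are comparable and large, but whose height laws are not close enough to give (i) directly. Here I would iterate. I would condition on a typical ``profile'' (the positions of a few concentrated elements) and pass to a sub-antichain whose conditional heights have a new scale. Since each iteration either closes out with (i) or (ii) or shrinks the antichain by a bounded factor while refining the scale, choosing $W_{k,t,\ep}$ as a tower-type function of $k,t,\ep$ should suffice for the bounded number of iterations needed. Making the conditioning preserve log-concavity and the approximate factorization is where I expect the real work to lie.
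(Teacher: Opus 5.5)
\textbf{There are genuine gaps in both alternatives and in the ``middle regime''.}

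\textbf{Alternative (i) rests on a false principle.} You claim that many elements of $A$ whose height laws nearly coincide (medians in a common window short compared with their spreads) must contain a $k$-set with nearly uniform order, via the swap $\sigma\mapsto(x\,y)\sigma$. That swap maps every linear extension to a linear extension only when $x$ and $y$ have the same strict up- and down-sets. Otherwise the mass of extensions on which it is illegal is not controlled by the marginal laws at all. Order probabilities depend on the joint law. Close marginals plus log-concavity of $F(x)-F(y)$ give only $\PP(x\prec y)\ge 1/e-o(1)$; this is the argument at the end of Section~\ref{sec:width-compactness} and the limitation noted in the introduction.

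For $k\ge3$ your step is refuted outright by the posets $P_r$ of Theorem~\ref{thm:counterexample}. They contain antichains $D$ of size $2^r\to\infty$ on which all marginal laws are identical (the set $D$ in the proof of Theorem~\ref{thm:local-width-counterexample}). Yet $\de_3(P_r)\le 1/6-\eta_\theta$, so no triple is a pseudo antichain. The paper instead obtains (i) from Koml\'os selection (Theorem~\ref{thm:selection}). Its hypotheses, bounded second moments and uniform anti-concentration of all pairwise differences, are conditions on the joint law. In the contrapositive, ``no pseudo antichain'' becomes usable structure: uniform doubling of the metric $\ell=\Delta+d$ (Lemma~\ref{cp:lem:doubling}).

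\textbf{Alternative (ii) asks for the wrong configuration.} It needs the $y_j$ to be concentrated in absolute height so that the joint law with $x$ approximately factors. Nothing forces such elements to exist across the window of $x$. The configuration the paper actually produces is relative. After normalizing $A_z=(Z_z-Z_x)/R=b_z+W_z$, it finds a reference vertex $p$ and, for every offset $h\in[-H,H]$, a ``replica'' $z$ with $|b_z-h|<\eta$ and $\|W_z-W_p\|_2<\eta$ (see \eqref{cp:eq:replicas}). The replicas share $p$'s fluctuation, which may be as large as that of the inserted vertex $v$. One then takes quantiles of $V=A_v-W_p$, not of the height of $v$. A union bound controls $\TV(\L_{P;Z},\I_t)$ without any independence or factorization.

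\textbf{Proving that replicas exist is the substance of the theorem.} The paper first converts large width into a large scale with small gaps, $G<R$, using the window bound $w\le2\sqrt3\max(R,G)$, Haqi's inequality, and Lemma~\ref{cp:lem:localization}. After rescaling by $R$, heights then have Poisson-like local counts and diffusive fluctuations. Doubling permits finitely many measurements $t_i$ that separate vertices far from $p$. The crossing and coin-flip arguments of Stages~3--5 show that vertices close to $p$ occur at every offset.

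Your ``middle regime'' iteration (condition on a profile, pass to a sub-antichain at a new scale, tower-type bound) supplies none of this. It gives no reason that the conditioning preserves log-concavity, approximate factorization, or large width, and no potential function that forces termination. As you say yourself, this is where the real work lies, so the proposal leaves the theorem essentially unproved.
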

To deduce Theorem~\ref{cor:kahn-saks}, apply Theorem~\ref{thm:main} with $k=2$ and $t=1$. Since $\I_1=\U_2$, either alternative gives distinct $x,y\in P$ with
$|\PP(x\prec y)-1/2|\le\ep$, and hence $\de(P)\ge1/2-\ep$. Note that both possibilities in Theorem~\ref{thm:main} are necessary: an $n$-element antichain has no pseudo-insertor with $t\ge 2$, while Theorem~\ref{thm:counterexample} shows that a poset may have arbitrarily large width but no pseudo-antichain with $k\ge 3$.

Let $\pi(x)$ be the number of $y\in P$ which are incomparable to $x$, and let $\pi(P)=\max_{x\in P}\pi(x)$. While we shall not focus on $\pi(P)$, we note that \cite[Theorem 1.7]{AK2} shows that Theorem~\ref{cor:kahn-saks} may be strengthened as follows:
\begin{theorem} If $\pi(P)\to \oo$ then $\de(P)\to 1/2$.
\end{theorem}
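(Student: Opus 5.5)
The plan is to deduce this from Theorem~\ref{cor:kahn-saks} by a reduction that turns "many incomparable elements to a single $x$" into "large width". Fix $\ep>0$. Suppose $\pi(P)\to\oo$, and choose $x$ with $\pi(x)=m\to\oo$. Let $I(x)$ be the set of elements incomparable to $x$. There are two cases, according to the width $w$ of $I(x)$.

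\textbf{Case 1: $w(I(x))\to\oo$ along a subsequence.} Then $w(P)\ge w(I(x))\to\oo$, and Theorem~\ref{cor:kahn-saks} gives $\de(P)\to1/2$ on that subsequence.

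\textbf{Case 2: $w(I(x))\le W$ is bounded.} By Dilworth, $I(x)$ is covered by at most $W$ chains, so some chain $C\se I(x)$ has $|C|\ge m/W\to\oo$. Note that $C\cup\{x\}$ is, as an induced subposet, exactly a chain with one isolated element. The natural target is then to show that $x$ is nearly uniformly inserted among the elements of $C$, producing a pseudo-insertor. This is not automatic, since the rest of $P$ biases the position of $x$. Instead I would argue directly about balance. Let $p_j=\PP(x\text{ precedes } c_j)$ for $C=\{c_1\lq\cds\lq c_s\}$. Then $p_j$ is nondecreasing in $j$, because $x\lq c_j$ implies $x\lq c_{j+1}$. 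I want some $j$ with $p_j\approx1/2$. Consecutive differences $p_{j+1}-p_j=\PP(c_j\lq x\lq c_{j+1})$ are probabilities of $x$ landing in a given gap.

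The key step, and the main obstacle, is to show that no single gap between consecutive elements of $C$, nor either end, captures $x$ with probability bounded away from $0$ as $s\to\oo$. For this I would use log-concavity (Stanley) of the position of $x$ relative to the chain. Fix a linear extension of $P\setminus\{x\}$ and count the insertion slots of $x$ that fall between $c_j$ and $c_{j+1}$. I expect a Stanley-type inequality, obtained via Aleksandrov--Fenchel on the mixed volumes coming from the order polytope, showing that $N_j$, the number of extensions with $x$ in gap $j$, is log-concave in $j$. It is also supported on all $s+1$ gaps, since every gap is realizable because $x$ is incomparable to each $c_j$.

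A log-concave distribution on $s+1$ consecutive points with $s\to\oo$ can still be concentrated, for example geometrically decaying, so log-concavity alone fails. This is precisely where the argument is hard, and it is why \cite{AK2} is cited rather than a short argument. My fallback is as follows. If the mass concentrates near some gap $j_0$, then $c_{j_0}$ and $c_{j_0+1}$ are each nearly balanced against $x$ only if the mass at that gap is large. Otherwise some threshold $p_j$ crosses $1/2$ with a jump of at most $\max_j N_j/\sum N_j$. So it suffices to bound the maximal gap mass by $2\ep$.

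To do this I would pass to a sub-chain: take every $r$-th element of $C$ to merge gaps. This keeps $s/r\to\oo$ elements while replacing the gap mass by sums over blocks. Monotonicity of the $p_j$ then yields a $j$ with $|p_j-1/2|\le$ the largest single-gap mass. The remaining hard point is showing that this mass tends to $0$. I would attempt it by comparing, via an injection shifting $x$ past one chain element, the extension counts of adjacent gaps, giving $N_{j+1}\ge cN_j$ uniformly. Combined with unimodality and $s\to\oo$, this bounds the maximal gap mass.
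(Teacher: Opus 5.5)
The paper does not prove this statement itself: it cites \cite[Theorem 1.7]{AK2} for the fact that Theorem~\ref{cor:kahn-saks} upgrades to the $\pi$-version. Your Case~1 is consistent with that, so everything beyond Theorem~\ref{cor:kahn-saks} rests on your Case~2, and there the proposal has a genuine gap. Your reduction (monotone $p_j$ and small gap masses give some $p_j\approx1/2$) is fine. The problem is that the gap masses need not be small, even when $x$ maximizes $\pi$, so the target of Case~2 is false rather than merely unproved.

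Let $P$ be the disjoint union of chains $u_1<\dots<u_a$ and $v_1<\dots<v_b$ with $b/(a+b)\to3/5$, and let $x=u_1$. Then $P$ has width $2$, $x$ attains $\pi(P)=b$, and $I(x)=\{v_1,\dots,v_b\}$ is a single chain. Now $v_j\prec x$ exactly when the first $j$ entries of a uniform merge of the two chains all come from the $v$-chain. Hence $\PP(x\prec v_j)\to1-(3/5)^j\in\{2/5,\,16/25,\dots\}$, and every $c\in I(x)$ has $|\PP(x\prec c)-1/2|\ge1/10-o(1)$. If you adjoin one element below everything except $x$, then $x$ becomes the unique maximizer of $\pi$, and the limiting values become $2/7,4/7,26/35,\dots$, still bounded away from $1/2$. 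Here $\delta(P)\to1/2$ does hold, but only through pairs $(u_i,v_j)$ far from the chain ends, whose relative position fluctuates on scale $\sqrt n$. In bounded width the balanced pair has to be found globally, not next to a maximizer of $\pi$.

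The individual steps of your fallback also fail.
(i) The uniform bound $N_{j+1}\ge cN_j$ is false in general. For $P=(x<d_1<\dots<d_N)+C$ with $|C|=s$ and $N/s\to\infty$, one gets $N_1/N_0=s/(N+s)\to0$, and gap $0$ carries mass $(N+1)/(N+s+1)\to1$.
(ii) Even where such a bound holds, it does not give what you need. In the two-chain example, $N_j$ is log-concave with mode $j=0$ and $N_{j+1}/N_j\to3/5$ for each fixed $j$, yet the largest gap mass tends to $2/5$. Unimodality plus a fixed ratio $c<1$ at best keeps the maximal gap mass away from $1$. Making it $o(1)$ would require adjacent ratios tending to $1$ around the mode, which is the whole difficulty.
(iii) Keeping every $r$-th element of $C$ merges gaps, so it can only increase the maximal gap mass; it works in the wrong direction.

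What is missing is a mechanism that, for bounded width and $\pi(P)\to\infty$, produces a balanced pair not necessarily involving $x$. That is exactly the ingredient the paper imports from \cite{AK2} rather than proving.
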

Theorem~\ref{thm:main} is deduced from Theorem~\ref{thm:small-gaps} in
Section~\ref{sec:width-compactness}; the latter is proved in
Section~\ref{sec:small-gap-proof}.

To state our remaining results, we first must introduce some notation. Let $F$ be uniform in the \em{order polytope} $\O(P)=\{z\in[0,1]^P:z_x\le z_y\text{ if }x<_Py\}$, and put
$$Q_x=\max_{y<x}F(y),\wsp R_x=\min_{y>x}F(y),\wsp c_x=\EE(F(x)-Q_x)=\EE(R_x-F(x)),$$
with $Q_x=0$ and $R_x=1$ when $x$ is minimal or maximal, respectively. Following a volume-preserving bijection of Stanley~\cite{Stan}, we may equivalently view $c$ as the centroid of the chain polytope $\C(P)=\{z\in [0,1]^P:\sum_{x\in C} z_x\le 1\text{ for all chains }C\se P\}$. Write $S_X=\sum_{x\in X}c_x$.

For $X\se P$ with $|X|\ge k$, define
$$\de_k(X;P)=\max_{\substack{x_1,\dots,x_k\in X\\\text{pairwise distinct}}} \min_{\pi\in S_k} \PP\ll(x_{\pi(1)}\lq\cds\lq x_{\pi(k)}\rr).$$
Write $\de_k(P)=\de_k(P;P)$, so that $\de(P)=\de_2(P)$. Note that if $Z$ is a $k$-element set with $\de_k(Z;P)\ge 1/k!-\be$, then $\TV(\L_{P;Z},\U_k)\le k!\be$.

It follows from \cite{AK} that
$$S_X=\Om(n)\imrsp \de_k(X;P)\ge 1/k!-\ep.$$
This is strengthened in the following:
\begin{theorem}\label{thm:tuples}
For every fixed $k\ge2$ and $X\se P$ with $|X|\ge k$,
$$S_X=\om(|X|^{1/2})\imrsp\de_k(X;P)\to1/k!.$$
Moreover, for every $\ep>0$, there is $\et=\et(k,\ep)>0$
such that
\begin{equation}\label{eq:fixed-accuracy}
S_X= \om(|X|^{1/2-\et})\imrsp\de_k(X;P)\ge 1/k!-\ep.
\end{equation}
\end{theorem}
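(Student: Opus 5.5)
We would prove Theorem~\ref{thm:tuples} through the continuous model $F$ uniform on $\O(P)$. The order of $F$ restricted to $Z$ is exactly $\L_{P;Z}$ (almost surely $F$ is injective). The goal is to find $k$ elements of $X$ whose values $F(x_1),\dots,F(x_k)$ behave almost like independent continuous variables with heavily overlapping distributions. Then every one of the $k!$ orders has probability close to $1/k!$.

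\textbf{Step 1: localization via the gaps $c_x$.} For each $x$, condition on $F$ restricted to $P\setminus\{x\}$. The value $F(x)$ is then uniform on $[Q_x,R_x]$, an interval of expected length $2c_x$. More generally, conditioned on $F$ outside a set $Z$, the law of $F|_Z$ is uniform on the order polytope of $Z$ with the interval constraints $[Q_x,R_x]$. If $Z\se X$ is an antichain of $P$, and the intervals $[Q_x,R_x]$ for $x\in Z$ have nearly equal endpoints relative to their lengths, then the induced order on $Z$ is nearly uniform. Since $X$ itself need not be an antichain, we first pass, by Dilworth or Mirsky arguments, to a large antichain subset. Here we use that the $c$'s along a chain sum to at most $1$: $c$ is the centroid of $\C(P)$, so $\sum_{x\in C}c_x\le1$ for every chain $C$. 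Elements of $X$ with large $c_x$ therefore cannot be stacked in long chains.

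\textbf{Step 2: concentration and pigeonholing.} We would show that $Q_x$ and $R_x$ are concentrated, using the log-concavity of the uniform measure on $\O(P)$ and the Brunn--Minkowski-type estimates from \cite{AK} (which give that $F(x)$ has fluctuations of order $c_x$-scale windows). Next we partition $[0,1]$ into windows. Because $S_X=\om(|X|^{1/2})$, by Cauchy--Schwarz or pigeonhole there is a window of length $\ell$ containing the typical positions of $\om(1)$ elements of $X$ whose $c_x$ is $\gg\ell$. Intuitively, $S_X$ large compared with $\sqrt{|X|}$ forces many elements with large gaps to crowd together. For such $k$ elements, the conditional intervals essentially coincide at the scale of their lengths, so the order is nearly uniform.

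\textbf{Step 3: quantitative version.} We would track the error terms to obtain \eqref{eq:fixed-accuracy}: the exponent $1/2$ becomes $1/2-\et$, with only a fixed accuracy $\ep$ in exchange.

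\textbf{Main obstacle.} The main difficulty is the near-independence in Step 2. Conditioning on $P\setminus Z$ is only legitimate when $Z$ is an antichain, and the intervals of different $x\in Z$ are correlated through common neighbours. I would first try to choose $Z$ greedily among elements with the largest $c_x$ in the crowded window, discarding comparable pairs; the chain bound $\sum_C c_x\le1$ limits how much is lost. I would then show that $\max_{x\in Z}Q_x$ and $\min_{x\in Z}R_x$ differ from the individual $Q_x,R_x$ by $o(\min c_x)$ with high probability. This last estimate is where the $|X|^{1/2}$ threshold should enter.
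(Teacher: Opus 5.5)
There is a genuine gap. The step from crowding to near-uniform orders is asserted rather than proved, and the mechanism you propose for it cannot carry the argument. That mechanism is an antichain $Z$ whose windows $[Q_x,R_x]$ coincide up to $o(\min c_x)$.

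\textbf{The crowding of Step 2 is not enough.} It does not use the exponent $1/2$, and it occurs in posets with no balanced triples. In the construction of Theorem~\ref{thm:counterexample}, $\sigma_x=O(n^{-1/2})$ by \eqref{eq:counter-variance}, while $S_P=\Theta(n^{a_\theta})$ with $a_\theta$ close to $1/2$. Since $c_x\le\sqrt3\,\sigma_x$, at least $\Omega(n^{a_\theta+1/2})$ elements have $c_x\ge S_P/(2n)$. Pigeonholing their means (for $a_\theta>1/4$) gives a window of length $\ell=o(S_P/n)$ containing $\omega(1)$ of them, all with $c_x\gg\ell$. These elements are automatically pairwise incomparable, since $x<y$ forces $\mu_y-\mu_x\ge c_x$. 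Yet $\de_3(P)\le1/6-\eta_\theta$. So no argument that uses only this kind of crowding, even on an antichain, can produce balanced $k$-tuples.

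\textbf{Your ``main obstacle'' estimate is false even when the theorem's conclusion holds.} Let $X=P$ be a disjoint union of $L^2$ chains of length $L$. Then $S_X\asymp L^2=\omega(|X|^{1/2})$. Elements at a common level of distinct chains are i.i.d., hence exactly uniformly ordered. But each window has length $\asymp1/L$, while its endpoints fluctuate independently across chains on scale at least $1/L$. So no antichain $k$-tuple has nearly coinciding windows. The uniformity there comes from fluctuations of $F(x)$ on the scale $\sigma_x$, which is $\asymp L^{-1/2}\gg c_x$ in the middle levels. Your heuristic that $F(x)$ fluctuates on the $c_x$ scale is the conceptual error; only $\sigma_x\ge c_x/\sqrt3$ holds.

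\textbf{What is missing.} You need to work at the fluctuation scale, not the window scale.
\begin{itemize}
\item Put $K_X=\sup_{u,b}u\,\#\{x\in X:\mu_x\in[b,b+u),\ u\le\sigma_x<2u\}$. A dyadic count together with $c_x\le\sqrt3\,\sigma_x$ gives $S_X=O(\sqrt{|X|K_X})$. This is exactly where $|X|^{1/2}$ enters.
\item The crowding this yields is $\omega(1/u)$ elements, not $\omega(1)$, with means in a window of length $u$ and standard deviation $\asymp u$.
\item The factor $1/u$ is needed because of the covariance budget \eqref{eq:budget}, applied after clipping to $[b-10u,b+10u]$: each element can be strongly correlated with only $O(1/u)$ others.
\item Greedy selection then leaves $\Omega(mu)\to\infty$ normalized variables $V_x=(F(x)-b)/u$ with $\EE V_x^2\le5$ and $\EE|V_x-V_y|$ bounded below. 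Log-concavity then gives the uniform small-ball bound $\PP(|V_x-V_y|\le h)\le Bh$.
\item The Koml\'os selection theorem (Theorem~\ref{thm:selection}) converts exactly these pairwise hypotheses into a $k$-tuple whose $k!$ orders each have probability $1/k!-o(1)$. No antichain, conditioning or independence is needed.
\end{itemize}
Finally, \eqref{eq:fixed-accuracy} does not follow by tracking error terms. It uses the contrapositive $K_X=O(1)$ together with a separate packing argument. There the selection theorem gives bounded doubling for the metric $d$, and \eqref{eq:fiber} gives separation. Together these yield $\#\{x\in X:c_x\ge u\}\le Cu^{-p}$ for some $p<2$, and hence $S_X\le C|X|^{1/2-\eta}$.
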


Hence, \eqref{eq:antichain} and Theorem~\ref{thm:tuples} give
\begin{corollary}\label{cor:antichains}
For every $k\ge2$ and $\ep>0$, there is $\et'=\et'(k,\ep)>0$ such that, for every antichain $A\se P$,
$$|A|=\om(n^{2/3-\et'})\imrsp S_A\ge\f{|A|^2}{2n}=\om(|A|^{1/2-\et})\imrsp \de_k(A;P)\ge 1/k!-\ep.$$
In particular, $w(P)=\om(n^{2/3})$ implies $\de_k(P)\to1/k!$.
\end{corollary}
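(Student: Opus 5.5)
The plan is to chain the antichain estimate \eqref{eq:antichain} with the exponent bookkeeping needed to enter the hypothesis of Theorem~\ref{thm:tuples}. I take \eqref{eq:antichain} to be the inequality $S_A\ge |A|^2/(2n)$ for every antichain $A\se P$, which is the first implication in the displayed chain. If it had to be re-derived, the route I would try is this. Every chain meets $A$ in at most one element, so $c$ lies in the chain polytope and each $c_x$ is the mean of a gap $F(x)-Q_x$. I would then compare $\sum_{x\in A}c_x$ with the total length $1$ via an averaging/Cauchy--Schwarz argument over the $n$ elements. I regard this as already established, so it is not the obstacle.

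\textbf{Step 1 (second implication).} Given $k$ and $\ep$, let $\et=\et(k,\ep)$ be the constant from \eqref{eq:fixed-accuracy}, and choose
$$\et'=\frac{2\et}{9+6\et}, \qquad\text{so that}\qquad \Bigl(\tfrac23-\et'\Bigr)\Bigl(\tfrac32+\et\Bigr)\ge 1 .$$
This works because the product equals $1+\tfrac{2\et}{3}-\et'\bigl(\tfrac32+\et\bigr)$. If $|A|=\om(n^{2/3-\et'})$, raising to the power $3/2+\et$ gives
$$|A|^{3/2+\et}=\om\bigl(n^{(2/3-\et')(3/2+\et)}\bigr),$$
which is at least $\om(n)$. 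Dividing by $2n|A|^{1/2-\et}$ then yields
$$\frac{|A|^2}{2n}=\om\bigl(|A|^{1/2-\et}\bigr).$$

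\textbf{Step 2 (third implication).} Since $|A|\to\oo$, we have $|A|\ge k$ eventually. Applying \eqref{eq:fixed-accuracy} with $X=A$ gives $\de_k(A;P)\ge 1/k!-\ep$.

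\textbf{Step 3 (the ``in particular'' clause).} Take $A$ to be a maximum antichain, so $|A|=w(P)=\om(n^{2/3})$. Then $|A|^{3/2}=\om(n)$, and hence
$$S_A\ge \frac{|A|^2}{2n}=\om\bigl(|A|^{1/2}\bigr).$$
The first, $\ep$-free part of Theorem~\ref{thm:tuples} gives $\de_k(A;P)\to1/k!$. Finally $\de_k(P)\ge\de_k(A;P)$ because $A\se P$, and $\de_k(P)\le 1/k!$ trivially, since the minimum of $k!$ probabilities summing to $1$ is at most $1/k!$.

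The only delicate point is keeping the $\om(\cdot)$ relations honest when taking powers. Since all exponents are positive constants, $f=\om(g)$ implies $f^{b}=\om(g^{b})$. The requirement on $\et'$ is just the displayed inequality on exponents, so I expect no real obstacle beyond \eqref{eq:antichain} itself.
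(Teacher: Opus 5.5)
Your proposal is correct and follows the paper's argument: the corollary comes from combining \eqref{eq:antichain} with Theorem~\ref{thm:tuples}, and your choice $\eta'=2\eta/(9+6\eta)$ gives $(2/3-\eta')(3/2+\eta)=1+\eta/3\ge1$, which is all the exponent bookkeeping needs. One aside: your fallback sketch for re-deriving \eqref{eq:antichain} would not work as stated, because chain-polytope constraints only bound sums of the $c_x$ from above; the actual input is $c_x\ge e(P)/(2n\,e(P-x))$, combined via Cauchy--Schwarz with $e(P)\ge\sum_{x\in A}e(P-x)$, but since you, like the paper, take \eqref{eq:antichain} as given, this does not affect your proof.
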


The next two constructions demonstrate the tightness of Theorem~\ref{thm:tuples}.

\begin{theorem}\label{thm:counterexample}
For every fixed $0<\ep<1/2$, there exist $\et_\ep>0$ and a sequence of posets $P$, with $n=|P|\to\oo$, such that
$$S_P=\om(n^{1/2-\ep}),\wsp\de_3(P)\le1/6-\et_\ep.$$
Furthermore, $w(P)=\om(n^{1/2-\ep})$ and $\de(P)=1/2.$
\end{theorem}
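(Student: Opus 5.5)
The plan is an explicit construction: a "banded" union of many chains. The band has to be narrow enough that the uniform linear extension stays rigid on the scale of any three elements, yet each element must sit inside enough slack that $S_P$ and the width are as large as claimed.

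\textbf{Construction.} Fix $q$ chains $C_1,\dots,C_q$, each of length $m$, with elements $x^{(a)}_1<\dots<x^{(a)}_m$. Assign each element a real \emph{nominal height} $h(x^{(a)}_i)=i+\theta_a$, where the offsets $\theta_a\in[0,1)$ are distinct. Declare $u<v$ whenever $h(v)-h(u)>d$, and then take the transitive closure. The parameters will be $q\approx n^{1/2-\ep/2}$, $m=n/q$, and a band width $d$ that is a fixed power of $n$ chosen in terms of $\ep$. We also arrange the offsets so that the map $x^{(a)}_i\mapsto x^{(a')}_{i}$ (or a reflection of the whole poset) swaps two specific elements. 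This automorphism forces $\PP(x\lq y)=1/2$ for those two incomparable elements, which gives $\de(P)=1/2$.

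\textbf{Width and centroid.} Every set of elements whose nominal heights lie in a window of length $d$ is an antichain. Such a window contains $\asymp qd$ elements, so $w(P)=\Om(qd)$, and this is $\om(n^{1/2-\ep})$ for our parameters. For $S_P$ I would use the chain-polytope description of $c$. Each element lies in a maximal chain of length about $n/(qd)\cdot$(const), and by symmetry along the band one expects $c_x\asymp qd/n$. This gives $S_P\asymp qd=\om(n^{1/2-\ep})$. I would make this rigorous with the lower bound $c_x\ge$ (inverse of the longest chain through $x$), up to constants, from the estimates of \cite{AK}.

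\textbf{Bounding $\de_3$ (the main obstacle).} I need to show that for every triple $u,v,w$ some ordering has probability at most $1/6-\et_\ep$. The idea is to compare the position of each element in a uniform linear extension with its nominal height. I would first prove a concentration statement: the rank of $x$ equals $q\,h(x)$ plus a fluctuation of order $\sqrt{\cdot}$, via a martingale/random-walk description of the extension built level by level. The case split is then as follows.
\begin{itemize}
\item If the nominal heights of the triple are spread out on the fluctuation scale, the extreme pair is ordered with probability bounded away from $1/2$, and the bound on $\de_3$ follows.
\item If all three heights are within the fluctuation scale, I would show that the relative order is governed by a limiting Gaussian process with a nondegenerate drift induced by the offsets $\theta_a$. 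This would make the law of the triple order a fixed non-uniform distribution, with one order having mass at most $1/6-\et_\ep$.
\end{itemize}
The delicate point is calibrating $d$ against $q$ so that the fluctuations are comparable to the spacing. If $d$ is too large, triples become nearly uniform, which Theorem~\ref{thm:tuples} forbids once $S_P=\om(n^{1/2})$. The exponent loss $\ep$ is exactly the room in this calibration.
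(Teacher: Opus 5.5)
\textbf{Verdict: there is a genuine gap.} The decisive step, the bound on $\de_3$, is only sketched. For the poset you describe it most likely fails, and some of the surrounding bookkeeping is also wrong.

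\textbf{Why your construction should have balanced triples.} As written (genuine chains plus the band relations), your poset is almost invariant under permuting the chains. With $d$ an integer, the offsets only decide whether chain $b$ may run ahead of chain $a$ by $d$ or by $d+1$ levels. A uniform linear extension is therefore a uniform lattice path $(\lambda_1,\dots,\lambda_q)$, where $\lambda_a$ counts the placed elements of chain $a$. This path is confined to the tube $\{\lambda_b-\lambda_a\le d+O(1)\}$, which is permutation-symmetric up to an $O(1)$ shift of the lattice. Meanwhile the relative lags between chains live on scale $\min(d,\sqrt m)\to\infty$. So for three elements at a common middle level in three different chains, the order is asymptotically that of an exchangeable triple, i.e. nearly uniform, and one should expect $\de_3(P)\to1/6$. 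An $O(1)$ height offset cannot produce a nondegenerate drift on a fluctuation scale that tends to infinity, and no calibration of $d$ against $q$ changes this.

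\textbf{Errors in the bookkeeping.}
\begin{itemize}
\item A height window of length $d$ contains about $d$ mutually comparable elements of each chain, so it is not an antichain. Since $P$ is covered by $q$ chains, $w(P)\le q$, not $\Omega(qd)$.
\item Since $c\in\C(P)$ sums to at most $1$ along each of your chains, $S_P\le q$, so $S_P\asymp qd$ is impossible.
\item The bound $c_x\gtrsim 1/(\text{longest chain through }x)$ is false. In $A\oplus\{x\}\oplus B$ with $A,B$ antichains of size $N$, the longest chain through $x$ has three elements, but $c_x=1/(2N+2)$.
\item Distinct offsets destroy the automorphism you want for $\de(P)=1/2$. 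This last point is easily repaired by taking two disjoint copies.
\end{itemize}

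\textbf{The missing idea.} You need a mechanism that gives \emph{every} incomparable triple a scale-invariant asymmetry. The paper builds this in recursively: $B_r=C_{p_r}\oplus P_r\oplus P_r\oplus C_{p_r}$ and $P_{r+1}=B_r+B_r$, with $p_r=\lceil\theta n_r\rceil$.
\begin{itemize}
\item Restrict any antichain triple to its smallest recursive component. Then two of its elements, $x,y$, lie in one old copy inside one branch. The third, $z$, lies in the other branch and is independent of $(x,y)$.
\item The Dirichlet embedding of the shared copy gives $m\Cov(F_x,F_y)\ge c_\theta>0$. The variance recursion gives $n_r\Var(F_x)\le C_\theta$ uniformly.
\item The nearly-uniform-triples lemma uses stochastically ordered marginals and the conditional monotonicity from log-supermodularity. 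It shows that if all six orders had probability at least $1/6-\xi$, every rectangle covariance of $(F_x,F_y)$ would be at most $48\xi$.
\item Integrating over a clipped window then contradicts the covariance lower bound, forcing $\xi\ge\et_\theta$.
\item The recursion $n_{r+1}=4(n_r+p_r)$, $w(P_r)=2^r$, and the $S$-multiplier $2/(1+\theta)$ give $w=\Theta(n^{\beta_\theta})$ and $S_P=\Theta(n^{3\beta_\theta-1})$. Both exponents tend to $1/2$ as $\theta\to0$.
\item Swapping the two copies of $B_r$ gives $\de(P)=1/2$.
\end{itemize}
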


\begin{theorem}\label{thm:local-width-counterexample}
For every fixed $0<\ep<1/6$, there exist $\et_\ep>0$ and a sequence of posets $P$ with maximum antichains $A$, where $n=|P|\to\oo$, such that
$$|A|=\om(n^{2/3-\ep}),\wsp\de_3(A;P)\le1/6-\et_\ep.$$
Furthermore, $S_P=\om(n^{2/3-\ep})$ and
$\de_k(P)=1/k!$ for fixed $k$ and sufficiently large $n$.
\end{theorem}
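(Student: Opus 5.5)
The plan is an explicit construction: a long chain with a large antichain $A$ attached so that the elements of $A$ sit in narrow, staggered windows of the chain, plus a small exchangeable gadget that gives $\de_k(P)=1/k!$. I would reuse the mechanism of Theorem~\ref{thm:counterexample}, which already gives $\de_3\le 1/6-\et_\ep$ with sum of centroid coordinates of order $n^{1/2-\ep}$. I would rescale it so that the relevant antichain has size $n^{2/3-\ep}$ while the whole poset has $n$ elements. The exponent $2/3$ is forced by Corollary~\ref{cor:antichains}: $|A|$ may be just below $n^{2/3}$, where $S_A\approx|A|^2/(2n)$ falls just below $|A|^{1/2}$.

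\textbf{The skeleton.} Take a chain $C=c_1<\cdots<c_m$ and an antichain $A=\{a_1,\dots,a_w\}$. Each $a_i$ is incomparable exactly to a window $c_{\ell_i},\dots,c_{\ell_i+L}$, lies above everything below the window and below everything above it. The windows are staggered with step $s=m/w$. I would choose $w\approx n^{2/3-\ep}$, $m\approx n$, and $L$ a suitable power of $n$ tied to the construction of Theorem~\ref{thm:counterexample}. No chain element lies in all windows, and two elements of $C$ are never incomparable. Hence every antichain contains at most one element of $C$ and cannot also contain all of $A$, and a direct check should show $A$ is a maximum antichain. The bound $S_P=\om(n^{2/3-\ep})$ should follow by estimating each $c_x$ for $x\in C\cup A$ from the window length. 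Each $c_{a_i}$ is of order $L/m$ times a correction for the crowding of the other $a_j$, and summing over $A$, together with the chain elements inside the windows, gives the claimed growth.

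\textbf{The triple estimate, which is the main obstacle.} I must show that every triple $a_i,a_j,a_k\in A$ has some ordering of probability at most $1/6-\et_\ep$. If the windows of the triple are far apart, their relative order is essentially forced. The hard case is three windows that heavily overlap. Here I would model the positions of $a_i,a_j,a_k$ in the chain as independent-ish random insertion points whose laws are shifted relative to each other by amounts comparable to their spread, as in the proof of Theorem~\ref{thm:counterexample}. Making this rigorous means controlling how the other, roughly $L/s$, elements of $A$ sharing the window perturb these insertion laws. My first attempt would condition on the positions of all other $a$'s, reducing the problem to inserting three elements into a chain with known gaps. I would then show, by a local-limit or concentration argument, that the shifts $(\ell_j-\ell_i)/L$ keep the insertion laws distinguishable, so the order $a_k\lq a_j\lq a_i$ (against the stagger) is bounded away from $1/6$. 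The parameter tuning (choice of $L$ relative to $s$) is exactly where the $2/3$ threshold appears, and I expect this step to need the most care.

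\textbf{The gadget giving $\de_k(P)=1/k!$.} I would adjoin a set $B$ of $b=b(n)\to\oo$ new elements, with $b=o(w)$, all having identical strict up-sets and down-sets in $C$ (say all incomparable exactly to one short segment of $C$ away from the windows of $A$) and pairwise incomparable. Elements with identical comparabilities are exchangeable under a uniform linear extension, so any $k$ of them are exactly uniformly ordered, giving $\de_k(P)=1/k!$ once $b\ge k$. Because $b=o(w)$ and $B$ sits away from the windows of $A$, this changes neither the maximality of $A$ nor the triple estimates on $A$, and only adds lower-order terms to $S_P$.
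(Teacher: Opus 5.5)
\textbf{Gap: the triple estimate is false for your skeleton.} The problem is structural, not a matter of tuning $L$.

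With the relations you specify, $a_i<c_{\ell_i+L+1}\le c_{\ell_j-1}<a_j$ as soon as $\ell_j-\ell_i\ge L+2$. So $A$ is an antichain only if all window starts lie in a range of length $L+1$. With step $s=m/w$ this forces $L\ge (w-1)m/w-1$, so the windows are essentially the whole chain rather than narrow. Combined with ``no chain element lies in all windows,'' the total stagger is pinned to exactly $L+1$.

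In that regime the triple estimate fails. The chain elements below all windows and above all windows split off as series summands, which changes no ordering law on $A$. Let $P'$ be the remaining poset, of size at most $2L+2+w$. Either $w\le L$, and every $a\in A$ has $c_a$ bounded below by an absolute constant, because its window alone spans a constant fraction of the ranks of $P'$. Or $w>L$, and \eqref{eq:antichain} gives $S_A\ge|A|^2/(2|P'|)=\Omega(|A|)$. In both cases $S_A=\Omega(|A|)=\omega(|A|^{1/2})$ in $P'$, so Theorem~\ref{thm:tuples} gives $\delta_3(A;P)=\delta_3(A;P')\to1/6$. Concretely, pigeonhole yields three windows whose offsets are $O(L/w)=o(L)$, so your step ``the shifts keep the insertion laws distinguishable'' cannot hold. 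A single chain with interval windows is exactly the regime where Theorem~\ref{thm:tuples} already forces balanced triples, so this skeleton cannot witness its sharpness.

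\textbf{Missing idea.} Obtain incomparability from a parallel sum rather than from overlapping windows, and import the triple bound from Theorem~\ref{thm:counterexample} instead of re-deriving it.

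The paper takes $H=P_r$, $M=|H|$, with a maximum antichain $D$ of size $w=\Theta(M^{\beta_\theta})$. It sets $P=Q_1+\cdots+Q_T$ with $Q_j=C_{jh}\oplus H\oplus C_{M-jh}$, $h=\lceil L\sqrt M\rceil$, $T=\lfloor M/(2h)\rfloor$, and $A=\bigcup_j D^{(j)}$.

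Elements in different branches are incomparable, yet their mean coordinates differ by about $L/(2\sqrt M)$, which is many standard deviations by \eqref{eq:local-counter-marginals}. So a triple meeting two branches contains a pair reversed with probability $O(1/L^2)\le 1/4$, and hence has an order of probability at most $1/12$. A triple inside one branch keeps its ordering law from $H$, so $\eta_\theta$ covers it. The exponent $2/3$ comes from $n=2MT=\Theta(M^{3/2})$ and $|A|=Tw=\Theta(n^{(1+2\beta_\theta)/3})$ with $\beta_\theta\to1/2$.

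Your exchangeable gadget $B$ is a valid way to get $\delta_k(P)=1/k!$; the paper instead uses that the branch minima are i.i.d. But the gadget does not address the main difficulty.
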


Theorem~\ref{thm:tuples} is proved in Section~\ref{sec:fixed-accuracy}; Theorems~\ref{thm:counterexample} and~\ref{thm:local-width-counterexample} are proved in Section~\ref{sec:counterexamples}.

The following question remains open:
\begin{question}[Width and tuple balance]\label{question:width-tuples}
Does there exist a fixed $\et>0$ such that
$$w(P)=\om(n^{2/3-\et})\imrsp \de_3(P)\to1/6\ ?$$
\end{question}

\section{Common Tools}\label{sec:common-tools}
We shall work with the following statistics of $F$:
$$\mu_x=\EE F(x),\wsp \si_x^2=\Var F(x).$$
Let $f:P\to[n]$ be a uniform order-preserving bijection, so that we have the coupling $F(x)=U_{(f(x))}$ where $U_{(j)}$ are the order statistics of $n$ independent uniforms, independent of $f$. It is convenient to work with a scaled multiple of $F$ rather than $F$ itself: let
$$Z_x=(n+1)F(x).$$
Then we have the \em{height} statistic
$$h_P(x)=\EE Z_x=\EE f(x)=(n+1)\mu_x$$
and, for distinct $x,y$, 
$$\PP(x\lq y)=\PP(F(x)<F(y))=\PP(Z_x<Z_y)=\PP(f(x)<f(y)).$$

Let
$$\De(x,y):=|h(x)-h(y)|, \wsp d(x,y)=(n+1)\s{\Var(F(x)-F(y))}=\s{\Var(Z_x-Z_y)}.$$

We write $h_P, \De_P(x,y),$ etc. when the underlying poset is unclear.

Conditional on the other coordinates, $F(x)$ is uniform on $[Q_x,R_x]$. Consequently
\begin{equation}\label{eq:windows}
 \si_x^2\ge c_x^2/3,\qquad
 \EE|F(x)-F(y)|\ge c_x/2\quad(x\ne y).
\end{equation}
For a random variable $V$ whose density $g_V$ is log-concave, we have the following standard bounds (see \cite{LV} for the first two bounds and \cite[Lemma 4.4]{AK} for the third):
\begin{equation}\label{eq:inputs}
\|g_V\|_\oo\le\f{\s2}{\s{\Var V}},\wsp \PP(V\ge \EE V),\PP(V\le \EE V) \ge 1/e, \wsp\PP(|V|\le h)\le\f{3h}{\EE|V|}.
\end{equation}
Since the law of $F(x)-F(y)$ is log-concave, we have $\PP(|F(x)-F(y)|\le h)\le6h/c_x$ in particular.

If $V$ has a log-concave density, $\EE V=0$, and
$\sigma=\|V\|_2>0$, then Lov\'asz--Vempala~\cite[Lemma 5.7]{LV} gives
\begin{equation}\label{eq:logconcave-tails}
 \PP(|V|>t\sigma)\le e^{1-t}\qquad(t\ge0).
\end{equation}
The case $0\le t\le1$ is trivial. Integrating gives
$\EE V^4\le24e\,\sigma^4$.

For $x\ne y$, conditional on every coordinate except $F(x)$, the distribution of $Z_x-Z_y$ is uniform on $[(n+1)Q_x-Z_y,(n+1)R_x-Z_y]$. Total variance and Jensen give
$d(x,y)^2\ge(n+1)^2\EE(R_x-Q_x)^2/12\ge(n+1)^2c_x^2/3$.
Interchange $x,y$ to get
\begin{equation}\label{eq:fiber}
 d(x,y)\ge(n+1)\f{\max(c_x,c_y)}{\s3}.
\end{equation}

For an antichain $A$, we have the following inequality from \cite[Theorem 5.4]{AK}, which follows from observing that $c_x\ge\f{e(P)}{2ne(P-x)}$ and applying Cauchy-Schwarz with the inequality $e(P)\ge \sum_{x\in A}e(P-x)$ from \cite{EHS} or \cite{Stac} (or an equivalent geometric version from \cite{BB}):
\begin{equation}\label{eq:antichain}
\sum_{x\in A}c_x\ge\f{|A|^2}{2n}.
\end{equation}
For an interval $I$, let $T_I$ denote clipping to $I$ (so $T_{[a,b]}(x)=\min(b,\max(a,x))$). Then
\begin{equation}\label{eq:budget}\Cov(T_I(F(x)),T_I(F(y)))\ge 0,\wsp \sum_y\Cov(T_I(F(x)),T_I(F(y)))\le |I|/4.
\end{equation}
To see \eqref{eq:budget}, put $H_a(t)=\min(t,a)-at$. Conditioning on all but one uniform gives
$\Cov(\mathbf1_{\{U_{(j)}>t\}},\sum_i\mathbf1_{\{U_i>a\}})=p_j(t)H_a(t)$, where $p_j$
is the density of $U_{(j)}$. Integrating in $t$ and averaging over
$f(x)$ yields
$$
 \sum_y\Cov(F(x),\mathbf1_{\{F(y)>a\}})=\EE H_a(F(x))\le1/4.
$$
Now integrate over $a\in I\cap[0,1]$. FKG applies to the uniform measure
on the order polytope; since both $T_I(t)$ and $t-T_I(t)$ are
increasing, it gives the claimed bound and nonnegativity of every summand.

The following is a direct consequence of the continuous form of Shepp's XYZ inequality \cite{She82} (see also \cite{CP23} for a version for weak order-preserving maps): for any distinct $x,y,z$,
\begin{equation}\label{eq:cov}
\Cov(F(y)-F(x),F(z)-F(y))\le0.
\end{equation}
In particular, 
\begin{equation}\label{eq:sub-add}
d(x,y)^2\le d(x,z)^2+d(z,y)^2.
\end{equation}

For any event $E$ with $\PP(E)>0$ and any variable $V$ of finite variance,
Cauchy--Schwarz gives
\begin{equation}\label{eq:weak-conditioning}
(\EE[V\mid E]-\EE V)^2\le\Var(V)(1/\PP(E)-1),\wsp \Var(V\mid E)\le\Var(V)/\PP(E).
\end{equation}

Finally, we need a recent result of Haqi \cite{Haqi}, solving affirmatively a conjecture of Kahn (see for instance \cite{BT}): for a nonempty ideal $I$ of $P$,
\begin{equation}\label{eq:Haqi}
\max_{x\in I} h(x)\ge |I|-|\max(I)|+1
\end{equation}
Here $\min(I)$ and $\max(I)$ are the sets of minimal and maximal elements, respectively. Applying \eqref{eq:Haqi} to both $P$ and its dual, we derive the following easy corollary: for any nonempty proper ideal $I$ of $P$,
\begin{equation}\label{eq:local-gap-input}
 \min_{y\in I^c}h(y)-\max_{x\in I}h(x)\le|\max(I)|+|\min(I^c)|-1.
\end{equation}

\section{Koml\'os selection}
The fixed-$k$ Koml\'os theorem \cite[Theorem 8.1]{AK}, extending
\cite{Komlos}, yields the following form.
\begin{theorem}[Koml\'os selection]\label{thm:selection}
Fix $k\ge2$ and $A,B,\ep>0$. There is an integer $M_{k,A,B,\ep}$ such that the following holds. Let $V_1,\dots,V_m$ be jointly distributed real random variables, with $m\ge M_{k,A,B,\ep}$, satisfying
\begin{equation}\label{eq:selection}
 \EE V_i^2\le A,\qquad
 \PP(|V_i-V_j|\le h)\le Bh\quad(i\ne j,\ h>0).
\end{equation}
Then there are distinct
$i_1,\cds,i_k$ such that, for every $\pi\in S_k$,
$$\PP(V_{i_{\pi(1)}}<\cds <V_{i_{\pi(k)}})\ge 1/k!-\ep.
$$
\end{theorem}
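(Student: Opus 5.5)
The plan is a Ramsey/compactness argument of Komlós type: extract many variables that agree approximately in law, then show that $k$ of them in a suitable position are nearly exchangeable in order.

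\textbf{Reduction.} By Markov, $\PP(|V_i|>\sqrt{A/\delta})\le\delta$ for each $i$. So, at a cost of $\delta$ in every probability, I may replace each $V_i$ by its clipping $T_L(V_i)$ to $L=[-\sqrt{A/\delta},\sqrt{A/\delta}]$. I then discretize this interval into a grid of mesh $\eta$. The anti-concentration hypothesis $\PP(|V_i-V_j|\le h)\le Bh$ is what controls the discretization. For a fixed pair, the probability that $V_i,V_j$ fall in a common grid cell, or in a configuration whose order differs from the discretized order, is at most $2B\eta$. Summing over the $\binom k2$ pairs of any $k$-tuple, the event ``no ties among the chosen $k$ after rounding'' fails with probability at most $k^2B\eta$. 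Hence it suffices to find $i_1,\dots,i_k$ whose rounded values $W_{i_j}\in\{1,\dots,N\}$, with $N=O(\sqrt{A/\delta}/\eta)$, satisfy
\[
\PP(W_{i_{\pi(1)}}<\cdots<W_{i_{\pi(k)}})\ge 1/k!-\epsilon/2 \quad\text{for every }\pi.
\]

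\textbf{Ramsey step.} For each $k$-subset $S=\{s_1<\dots<s_k\}$ of $[m]$, I color $S$ by the joint law of $(W_{s_1},\dots,W_{s_k})$ rounded to precision $\epsilon'$. This joint law is a vector in a simplex of dimension $N^k$, so there are finitely many colors. For $m$ large, Ramsey's theorem gives a monochromatic set $J$ of size $2k$ (or larger). On $J$, every increasing $k$-tuple has approximately the same joint law $\nu$, up to $\epsilon'$ in total variation. The task is now to show that $\nu$ is close to exchangeable, i.e.\ $\nu(\text{order }\pi)\approx 1/k!$ for every $\pi$, using that ties have small probability.

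\textbf{Main obstacle: symmetrization.} Near-equality of joint laws of index-ordered tuples does not by itself give exchangeability. For example, one could have $V_i=i+\text{noise}$, which is monochromatic yet always ordered; this example is excluded only by the bounded second moment. So I use the approach of Komlós' original argument, as in \cite[Theorem 8.1]{AK}. Take a long monochromatic sequence $j_1<\dots<j_r$ with $r$ huge. Every pair has the same law, so $p:=\PP(W_{j_a}<W_{j_b})$ for $a<b$ is approximately constant. If $p\ge 1/2+\gamma$, then the expected number of inversions in the sequence is small. Because the values lie in only $N$ grid cells, and the total number of inversions is at least about $r^2/(2N)$ from pigeonhole, this forces $p\le 1/2+O(1/N)+o(1)$; symmetrically $p\ge 1/2-O(1/N)-o(1)$. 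The same counting extends to $k$-tuples: for a fixed pattern $\pi$, the frequency of $\pi$ among the $k$-tuples of a random sequence of length $r$ with values in $N$ cells and few ties is forced to be close to $1/k!$. The reason is that the random sequence, viewed as a point in $[N]^r$, has expected pattern counts equal to $\binom rk\nu(\pi)$ by monochromaticity. By a de Finetti-type/Ramsey argument (sequences with few equal values have pattern frequencies averaging to uniform over relabellings), these counts must be nearly uniform.

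I expect this last point to be the crux. My first attempt there is a direct exchangeability lemma: a distribution on $[N]^r$ whose restriction to every increasing $k$-tuple of coordinates is the same $\nu$ has $\nu$ close, up to $O(k^2/r)$, to an exchangeable law. I would prove this by averaging over uniformly random $k$-subsets of a random permutation of the positions. Monochromaticity makes the positional order irrelevant, which yields exchangeability of $\nu$. Exchangeability together with the tie probability $\le k^2B\eta$ then gives each order probability within $k^2B\eta$ of $1/k!$. Finally I choose $\delta,\eta,\epsilon'$ small and $r$ large in terms of $k,A,B,\epsilon$, which fixes $M_{k,A,B,\epsilon}$.
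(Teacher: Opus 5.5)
There is a genuine gap at the step you yourself flag as the crux, and both arguments you give for it fail.

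\emph{The counting argument.} Pigeonhole on $N$ cells forces about $r^2/(2N)$ \emph{tied} pairs, not inversions. The nondecreasing sequence $1,\dots,1,2,\dots,2,\dots,N,\dots,N$ (blocks of length $r/N$) has no inversions and only about a $1/N$ fraction of tied pairs, yet its identity-pattern frequency is $1-O(k^2/N)$. So the conclusion $p\le 1/2+O(1/N)+o(1)$ does not follow. More generally, no count over a single realization can force pattern frequencies near $1/k!$; a valid argument must use that \emph{different} index tuples have the same law.

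\emph{The permutation argument.} Averaging over a random permutation of positions computes the law of the symmetrization $\frac{1}{k!}\sum_{\pi}\nu\circ\pi$. That law is exchangeable for every $\nu$ whatsoever, so it tells you nothing about $\nu$ itself. Monochromaticity only says that all \emph{increasing} tuples look alike. The claim that this makes positional order irrelevant is precisely the statement to be proved, not a consequence of it. The claimed rate $O(k^2/r)$, uniform in $N$, is likewise unsupported.

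\emph{What is missing.} The missing ingredient is exactly the finite-valued Koml\'os theorem, \cite[Theorem 8.1]{AK}. The paper's proof does your reduction step, bounding collisions by $kA/T^2+\binom{k}{2}Bh\le\varepsilon/2$ using interval indices with two tail intervals rather than clipping. It then applies that theorem as a black box to the interval indices. This gives $k$ indices whose $k!$ order probabilities differ pairwise by at most $\varepsilon/2$; combined with $\sum_\pi p_\pi\ge 1-\varepsilon/2$, this finishes the proof. If you cite it, your Ramsey step is unnecessary.

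\emph{A possible self-contained route.} For fixed $N$, and without a rate, your exchangeability lemma can be proved by compactness. Suppose it failed for arbitrarily long approximately monochromatic sequences.
\begin{enumerate}
\item Iterate Ramsey over all tuple sizes $j$.
\item Pass to limits in the finite simplices on $[N]^j$.
\item Apply Kolmogorov extension.
\end{enumerate}
This produces an infinite $[N]$-valued \emph{spreadable} sequence, meaning all increasing $j$-tuples are equal in law for every $j$, whose $k$-tuple law stays bounded away from exchangeable. That contradicts the Ryll-Nardzewski theorem, which says infinite spreadable sequences are exchangeable.

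Your final step is fine once the clipping cost of order $k\delta$ is included: exchangeability together with the tie bound gives each order probability at least $(1-\PP(\text{tie}))/k!$, up to the approximation error.
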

\begin{proof}
Fix $k\ge 2$ and $A,B,\ep>0$. Choose $T$ large and $h>0$ small enough so that $kA/T^2+\b{k}{2}Bh\le \ep/2$. Partition $[-T,T]$ into intervals of length at most $h$ and add the two tail intervals. Then, for $V_{i_1},\dots, V_{i_k}$, the probability that two $V_i$'s are in the same interval is at most
$$\sum_a \PP(|V_{i_a}|>T)+\sum_{a<b} \PP(|V_{i_a}-V_{i_b}|\le h)\le kA/T^2+\b{k}{2}Bh\le \ep/2.$$

Let $B_i$ be the index of the interval containing $V_i$, with the
intervals indexed from left to right. Apply \cite[Theorem 8.1]{AK}
to the finite-valued variables $B_i$, with error $\ep/4$. For sufficiently
large $m$, it supplies distinct indices $i_1,\dots,i_k$ for which the
probabilities $p_\pi=\PP(B_{i_{\pi(1)}}<\cdots<B_{i_{\pi(k)}})$ differ from one another by at most $\ep/2$. The collision estimate gives $\sum_\pi p_\pi\ge1-\ep/2$, so $p_\pi\ge\f{1-\ep/2}{k!}-\ep/2\ge1/k!-\ep$. A strict order of interval indices implies the same strict order of the
original variables. Hence $\PP(V_{i_{\pi(1)}}<\cdots<V_{i_{\pi(k)}})\ge1/k!-\ep$
for every $\pi$, as required.
\end{proof}

The following consequence of Theorem~\ref{thm:selection} will be used in
Sections~\ref{sec:width-compactness} and~\ref{sec:small-gap-proof}. Put 
$$\ell(x,y)=\De(x,y)+d(x,y).$$

\begin{lemma}\label{cp:lem:doubling}
If $P$ has no $\ep$-pseudo antichain (in particular if $\de_k(P)<1/k!-\ep$), every $\ell$-ball of radius $r$ is covered by at most $N_{k,\ep}$ balls of radius $r/2$.
\end{lemma}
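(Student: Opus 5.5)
The plan is a packing argument: an $r/2$-separated set inside an $\ell$-ball of radius $r$ has bounded size, because a large one would yield an $\ep$-pseudo antichain through Theorem~\ref{thm:selection}. Fix a center $x_0$ and radius $r$, and take a maximal subset $Y=\{y_1,\dots,y_m\}$ of the ball $\{y:\ell(x_0,y)\le r\}$ whose points are pairwise at $\ell$-distance greater than $r/2$. By maximality, the balls of radius $r/2$ around the $y_i$ cover the $\ell$-ball of radius $r$. It therefore suffices to show that $m\ge M_{k,A,B,\ep}$ forces an $\ep$-pseudo antichain, for suitable absolute constants $A,B$ depending only on $k$. We may then take $N_{k,\ep}=M_{k,A,B,\ep}$. (For this step $\ell$ need not be a genuine metric; only the separation property is used.)

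The variables will be normalized versions of $Z_{y_i}$, namely $V_i=(Z_{y_i}-h(x_0))/r$. For the second-moment bound, note that $\EE V_i^2$ is controlled by $\Var(Z_{y_i})$ and $(h(y_i)-h(x_0))^2\le\De(x_0,y_i)^2\le r^2$. The variance is the harder part. We have $\Var(Z_{y_i})\le 2\Var(Z_{y_i}-Z_{x_0})+2\Var(Z_{x_0})$, with $\Var(Z_{y_i}-Z_{x_0})=d(x_0,y_i)^2\le r^2$. The term $\Var Z_{x_0}$ need not be $O(r^2)$, so I will instead center each variable by subtracting $Z_{x_0}$. Set
$$V_i=\frac{Z_{y_i}-Z_{x_0}}{r}.$$
Then $\EE V_i^2=(\De(x_0,y_i)^2+d(x_0,y_i)^2)/r^2\le 1$, so $A=1$. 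Since $V_i-V_j=(Z_{y_i}-Z_{y_j})/r$, the order among the $V_i$ coincides with the order among the $Z_{y_i}$, which is the order in the random linear extension. This keeps the conclusion about $\L_{P;Z}$ intact.

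The anticoncentration condition comes from log-concavity. The law of $Z_{y_i}-Z_{y_j}$ is log-concave, being a linear image of the uniform measure on the convex body $\O(P)$. By the first bound in \eqref{eq:inputs}, $\PP(|V_i-V_j|\le h)\le 2hr\sqrt2/d(y_i,y_j)$. This is useful only if $d(y_i,y_j)\gtrsim r$, whereas separation gives only $\De(y_i,y_j)+d(y_i,y_j)>r/2$. I expect this to be the main obstacle: when $d(y_i,y_j)$ is small but $\De(y_i,y_j)$ is large, the density bound fails. In that case, however, the pair is far apart in mean relative to its spread. So I would split the separation into two cases, either $d(y_i,y_j)\ge r/4$ or $\De(y_i,y_j)\ge r/4$.

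To handle the two cases I would use Ramsey's theorem on the 2-coloring of pairs. If $m$ is huge, there is a large subset that is monochromatic.

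In the $d$-monochromatic case, every pair satisfies $d(y_i,y_j)\ge r/4$. Then \eqref{eq:selection} holds with $B=8\sqrt2$, and Theorem~\ref{thm:selection} gives $k$ indices all of whose orders have probability at least $1/k!-\ep/k!$. Rescaling $\ep$, this is an $\ep$-pseudo antichain, as noted after the definition of $\de_k$.

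In the $\De$-monochromatic case, we have $M'$ points whose heights $h(y_i)$ lie in an interval of length $2r$ (all are within $r$ of $h(x_0)$), yet are pairwise at least $r/4$ apart. This allows at most $9$ points, a contradiction once $M'\ge 10$.

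The resulting bound is $N_{k,\ep}=R(\max(M_{k,1,8\sqrt2,\ep/k!},10))$, where $R$ is the two-color Ramsey number for pairs.
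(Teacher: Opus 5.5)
Your proof is correct and is essentially the paper's argument. You use the same normalization $(Z_z-Z_{x_0})/r$ to get second moments at most $1$, the same log-concave density bound to turn a lower bound on $d$ into the small-ball hypothesis, and the same application of Theorem~\ref{thm:selection} to an $r/2$-separated set. The one difference is how you find a subset on which $d$ is bounded below: you use Ramsey's theorem plus the fact that at most $9$ points can be pairwise $\Delta$-separated by $r/4$, while the paper pigeonholes the heights (all within a window of length $2r$) into intervals of length $r/8$, inside each of which separation forces $d>3r/8$ for every pair. The paper's route avoids Ramsey and gives a bound linear in the selection threshold rather than a Ramsey number.
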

\begin{proof}
Take an $r/2$-separated subset of the $\ell$-ball centered at $b$. Partition the heights into a bounded number of intervals of length $r/8$. Within one interval, distinct points satisfy
$$d(s,t)=\ell(s,t)-\Delta(s,t)>r/2-r/8=3r/8.$$

Apply Theorem~\ref{thm:selection} with the fixed $k$ to
$X_z=(Z_z-Z_b)/r$ for the vertices in this interval. Since they lie in the
ball,
$$\EE X_z^2=\f{\Delta(z,b)^2+d(z,b)^2}{r^2}\le1.$$

Moreover, $\sd(X_s-X_t)>3/8$, so log-concavity and \eqref{eq:inputs} give
$$\PP(|X_s-X_t|\le u)\le Cu\wsp(u>0).$$

These are precisely the two hypotheses of Theorem~\ref{thm:selection}. If the interval contained sufficiently many selected vertices, it would give a $k$-tuple with every order probability at least $1/k!-\ep/(2k!)$, and hence an $\ep$-pseudo antichain, contrary to the hypothesis. Thus the number of selected vertices in each interval, and hence in the ball, is bounded in terms of $k,\ep$. A maximal $r/2$-separated subset supplies centers for the required covering.
\end{proof}

In particular, balls of bounded radius have covering numbers at most
$C r^{-D}$ at scale $0<r\le1$, for constants depending only on
$k,\ep$.

\section{Proof of Theorem~\ref{thm:tuples}}
\label{sec:fixed-accuracy}
\begin{proof}[Proof of Theorem~\ref{thm:tuples}]
Fix $k\ge2$, and put
\[
 K_X:=\sup_{\substack{0<u\le1/2\\b\in\RR}}
 u\,|\{x\in X:\mu_x\in[b,b+u),\ u\le\si_x<2u\}|.
\]
We first show that, for every nonempty $X\se P$ and
$0<s\le1/2$,
\begin{equation}\label{eq:count}
 |\{x\in X:\si_x\ge s\}|\le CK_Xs^{-2},
\end{equation}
where $C$ is an absolute constant.
At each dyadic scale $u=2^js\le1/2$, partition $[0,1]$ into
$O(1/u)$ intervals of length $u$. Each contains at most
$K_X/u$ elements with $u\le\si_x<2u$. Summing
$O(K_X/u^2)$ proves \eqref{eq:count}. Since $c_x\le\s3\si_x$
and $c_x\le1/2$, integration gives
$$S_X=\int_0^{1/2}|\{x\in X:c_x>t\}|\,dt\le\int_0^\oo\min\{|X|,CK_Xt^{-2}\}\,dt
 =O(\s{|X|K_X}).
$$
Hence $K_X=\Om(S_X^2/|X|)$, so $S_X=\om(|X|^{1/2})$ implies $K_X\to \oo$.

If $K_X\to\oo$, we now show that $\de_k(X;P)\to1/k!$. We will find a growing subset of $X$
whose normalized coordinates satisfy the two hypotheses of
Theorem~\ref{thm:selection}: bounded second moments and a uniform
small-ball bound for every pair.

By the definition of $K_X$, choose $u,b$ so that
$$A=\{x\in X:\mu_x\in[b,b+u),\ u\le\si_x<2u\},\wsp m=|A|,\wsp mu\ge K_X/2\longrightarrow\oo.$$
For $x\in A$, put $V_x=(F(x)-b)/u$. Then
$0\le\EE V_x<1$ and $1\le\Var V_x<4$, so
$\EE V_x^2\le5$. It remains to find many of these coordinates
whose pairwise differences do not concentrate near zero.

Put $Y_x=T_{[-10,10]}(V_x)$. The density bound in
\eqref{eq:inputs} gives $\|g_{V_x}\|_\oo\le\s2$, and
Markov's inequality gives $\PP(|V_x|>10)\le1/20$.
For any interval $J$ of length $1/4$, therefore,
$$\PP(Y_x\in J)\le\PP(V_x\in J)+\PP(|V_x|>10)\le\f{\s2}{4}+\f1{20}<\f12.$$
Taking $J=[\EE Y_x-1/8,\EE Y_x+1/8]$ shows that
$\Var Y_x\ge(1/8)^2/2=:v=1/128$.

For $I=[b-10u,b+10u]$, we have
$Y_x=(T_I(F(x))-b)/u$. Since every covariance in
\eqref{eq:budget} is nonnegative, restricting its sum to $A$ gives
$$
 \sum_{y\in A}\Cov(Y_x,Y_y)
 =u^{-2}\sum_{y\in A}\Cov(T_I(F(x)),T_I(F(y)))
 \le u^{-2}\f{20u}{4}=\f5u.
$$
Join distinct $x,y\in A$ by an edge when
$\Cov(Y_x,Y_y)>v/2$. Each vertex has degree at most
$10/(vu)$. Greedily keeping a vertex and deleting its neighbors
therefore produces a set $B\se A$ with
$$
 |B|\ge\f{m}{1+10/(vu)}=\Om(mu)\longrightarrow\oo,
 \qquad \Cov(Y_x,Y_y)\le v/2\quad(x\ne y\in B).
$$
Here the implied constant is absolute, since $u\le1/2$. For distinct $x,y\in B$,
$$\EE(Y_x-Y_y)^2\ge\Var Y_x+\Var Y_y-2\Cov(Y_x,Y_y)\ge v.$$
Clipping is $1$-Lipschitz and $|Y_x-Y_y|\le20$, hence
$$
 \EE|V_x-V_y|\ge\EE|Y_x-Y_y|
 \ge\f{\EE(Y_x-Y_y)^2}{20}\ge\f v{20}.
$$
The difference $V_x-V_y=(F(x)-F(y))/u$ has a log-concave
density. Thus \eqref{eq:inputs} gives, for every $h>0$,
$$
 \PP(|V_x-V_y|\le h)
 \le\f{3h}{\EE|V_x-V_y|}\le\f{60}{v}h.
$$
Together with $\EE V_x^2\le5$ and $|B|\to\oo$, this verifies
all hypotheses of Theorem~\ref{thm:selection}. It supplies $k$
elements of $B\se X$ with every ordering probability
$1/k!+o(1)$. The common positive rescaling preserves their orders,
and the orders under $F$ have the same law as under $f$.
Therefore $\de_k(X;P)\to1/k!$. Together with the bound on $K_X$, this proves the first assertion.

Now fix $\ep>0$, and suppose $\de_k(X;P)<1/k!-\ep$; all constants below may depend on $k,\ep$. The preceding argument gives $K_X=O_{k,\ep}(1)$.
Thus \eqref{eq:count} implies
\begin{equation}\label{eq:fixed-variance-count}
 |\{x\in X:\si_x>s\}|\le C s^{-2}\qquad(s>0).
\end{equation}
We refine the count by covering in the difference metric $d$ in rank units.

Choose an integer $L\ge2$ large enough that the finite selection
theorem applies to $L$ variables satisfying
$\EE V_i^2\le5$ and
$\PP(|V_i-V_j|\le h)\le4\s2\,h$, with target error
$\ep/2$. Put $D=\log_2 L$. We claim that, if $0<u\le s$
and $Y\se X$ satisfies
$$
 \mu_x\in[b,b+u),\qquad \si_x\le s,\qquad c_x\ge u
 \quad(x\in Y),
$$
then
\begin{equation}\label{eq:fixed-packing}
 |Y|\le C(s/u)^D.
\end{equation}
Consider a ball in $Y$, centered at $a\in Y$, of radius
$r\ge(n+1)u/2$. For any subset in this ball with pairwise distances
greater than $r/2$, set $V_x=(Z_x-Z_a)/r$. Then
$$
 \EE V_x^2=\f{d(x,a)^2+(n+1)^2(\mu_x-\mu_a)^2}{r^2}\le5,
 \qquad
 \PP(|V_x-V_y|\le h)
 \le\f{2\s2\,hr}{d(x,y)}\le4\s2\,h.
$$
The second bound follows from the log-concave density bound in
\eqref{eq:inputs}. Subtracting the same random coordinate preserves
all orders. The deficit therefore forces this separated subset to
have fewer than $L$ elements. Taking a maximal separated subset
covers the ball by at most $L$ balls of radius $r/2$, with centers
in $Y$.

The diameter of $Y$ is at most $2(n+1)s$. Start with a ball of radius
$2^J(n+1)u$, where $J=\lceil\log_2(2s/u)\rceil$, and apply the
covering step $J+2$ times. The final radii are $(n+1)u/4$.
By \eqref{eq:fiber}, distinct points of $Y$ have distance at least
$(n+1)u/\s3>(n+1)u/2$, so each final ball contains at most one point.
Hence $|Y|\le L^{J+2}\le C(s/u)^D$, proving
\eqref{eq:fixed-packing}.

For $0<u\le1/2$, separate the elements with $c_x\ge u$ into
those with $\si_x>s$ and those with $\si_x\le s$.
Use \eqref{eq:fixed-variance-count} for the first group and partition
the second group's coordinate means into $O(1/u)$ intervals of length $u$.
Equation~\eqref{eq:fixed-packing} gives, for any $s\ge u$,
$$
 |\{x\in X:c_x\ge u\}|
 \le C\ll(s^{-2}+s^D u^{-(D+1)}\rr).
$$
Choose $s=u^{(D+1)/(D+2)}$. Both terms are $u^{-p}$, where
$p=2(D+1)/(D+2)\in(1,2)$. Integrating this count, using
$c_x\le1/2$, yields
$$S_X\le\int_0^\oo\min\{|X|,Cu^{-p}\}\,du\le C' |X|^{1-1/p}=C'|X|^{1/2-\ga},\wsp \ga=\f1{2(D+1)}>0.$$
Taking $\et=\ga$, the contrapositive proves~\eqref{eq:fixed-accuracy}.
\end{proof}

\section{Counterexamples for window and local width thresholds} \label{sec:counterexamples}

Let $C_t$ be the chain of size $t$. We use $P+Q$ to denote the parallel sum (disjoint union) of $P$ and $Q$, and $P \op Q$ for the series sum (i.e. every element of $P$ precedes every element of $Q$. Both constructions in this section are series-parallel, meaning they can be formed from chains by $+$ and $\op$.

Note that both constructions generalize to $\de_k$, as for every fixed $k\ge3$, restricting a $k$-tuple to three elements gives, for any set $X$ of size at least $k$,
$$\de_k(X;P)\le\f6{k!}\de_3(X;P)$$

\subsection{A recursive example without balanced triples}
Fix $0<\th<1$ and a sufficiently large integer $N$. Starting from
$P_0=C_N$, put
\begin{equation}\label{eq:counter-construction}
 n_r=|P_r|,\qquad p_r=\lceil\th n_r\rceil,\qquad
 B_r=C_{p_r}\op P_r\op P_r\op C_{p_r},\qquad
 P_{r+1}=B_r+B_r.
\end{equation}
Copies in these expressions are disjoint. Clearly $\de(P_r)=1/2$ when $r\ge 1$.

Taking an $n$-element component in a series sum of total size $b$ multiplies each $c_x$ by $(n+1)/(b+1)$, while parallel composition leaves $c_x$ unchanged. Each end-chain element contributes $1/(2n_r+2p_r+1)$. Thus

$$n_{r+1}=4(n_r+p_r),\wsp w(P_r)=2^r,\wsp S_{P_{r+1}}=\f{4(n_r+1)}{2n_r+2p_r+1}S_{P_r}+\f{4p_r}{2n_r+2p_r+1}.$$

The size multiplier tends to $4(1+\th)$ and the $S_P$ multiplier to $2/(1+\th)>1$, with summable errors $O(1/n_r)$. Consequently, for fixed $N$,
\begin{equation}\label{eq:counter-result}
 n_r=\Th\bigl((4(1+\th))^r\bigr),\qquad
 w(P_r)=\Th(n_r^{\be_\th}),\qquad
 S_{P_r}=\Th(n_r^{a_\th}),
\end{equation}
where
$$\be_\th=\f{\log2}{\log(4(1+\th))},\wsp a_\th=3\be_\th-1=\f{\log(2/(1+\th))}{\log(4(1+\th))}.$$
Both exponents approach $1/2$ from below as $\th\to0$.

Write $U\le_{\rm st}V$ if $\PP(U>t)\le\PP(V>t)$ for every $t$.
We use the following observation.
\begin{lemma}[Nearly uniform triples]\label{lem:counter-independence}
Let $X,Y,Z$ have continuous, stochastically ordered marginal laws,
with $Z$ independent of $(X,Y)$. Suppose the conditional law of either
of $X,Y$, given the other, increases stochastically with the conditioning
value. If each of the six orders has probability at least $1/6-\ep$,
where $\ep\ge0$, then, for every $s,t\in\RR$,
\begin{equation}\label{eq:counter-rectangles}
 0\le\PP(X\le s,Y\le t)-\PP(X\le s)\PP(Y\le t)\le48\ep.
\end{equation}
\end{lemma}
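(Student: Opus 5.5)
The lower bound in \eqref{eq:counter-rectangles} is the standard positive-quadrant-dependence consequence of stochastic monotonicity, and I would prove it directly. Write $a(s)=\PP(X\le s)$, $b(t)=\PP(Y\le t)$, and $G(s,t)=\PP(X\le s,Y\le t)-a(s)b(t)$. The hypothesis says $y\mapsto\PP(X\le s\mid Y=y)$ is nonincreasing. Hence the average of this function over $\{Y\le t\}$ is at least its average over all $y$, i.e.\ $\PP(X\le s\mid Y\le t)\ge a(s)$, which gives $G(s,t)\ge0$.

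For the upper bound I would exploit the independent third variable $Z$. Conditioning on $Z$ expresses the six order probabilities through $a(Z)$, $b(Z)$ and $G(Z,Z)$. The two orders with $Z$ in the middle have total probability
$$\PP(X<Z<Y)+\PP(Y<Z<X)=\EE\bigl[a(Z)(1-b(Z))+b(Z)(1-a(Z))\bigr]-2\,\EE\,G(Z,Z).$$
The orders with $Z$ first and last give analogous expressions, in which $\EE G(Z,Z)$ enters with a plus sign. The hypothesis that each order has probability at least $1/6-\ep$, together with the total being $1$, pins each of these three pair sums within $O(\ep)$ of $1/3$, and likewise each of $\PP(X<Z)$, $\PP(Y<Z)$, $\PP(X<Y)$ within $O(\ep)$ of $1/2$.

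Next I would use that the marginals are stochastically ordered and continuous. Since, say, $a\ge b$ pointwise (or the reverse), $\EE a(Z)\approx\EE b(Z)\approx1/2$ forces $a(Z)$ and $b(Z)$ to be $L^1$-close. Passing to quantile coordinates of $Z$, the quantities $a(Z)$ and $b(Z)$ are then close to a common variable dominated by $U=$ the law-of-$Z$ rank. With that reduction, $\EE[a(1-b)+b(1-a)](Z)$ is at most $1/3+O(\ep)$, with equality exactly when all three laws coincide. Combined with the middle-pair lower bound $\ge1/3-2\ep$, this yields $\EE G(Z,Z)=O(\ep)$.

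The main obstacle is upgrading the averaged diagonal bound $\EE G(Z,Z)=O(\ep)$ to the pointwise bound $G(s,t)\le48\ep$ for all $s,t$. I would first try to show that, in uniform coordinates, $G$ is controlled off the diagonal by its diagonal values: $G(s,t)\le G(m,m)$ for suitable $m$ between $s$ and $t$, using the stochastic-increase hypothesis in both directions (which makes $\PP(X\le s\mid Y\le t)$ monotone in $t$). The remaining step is to get a linear rather than square-root bound from $\EE G(Z,Z)=O(\ep)$. A Lipschitz/window argument alone loses a square root, so I would instead look for a monotonicity of $G(m,m)$ or of $G(m,m)/\min(a,b)$ along the diagonal. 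That would let a single large diagonal value force a proportional contribution over a set of $Z$-probability bounded below, hence an $O(\ep)$ pointwise bound with an explicit constant such as $48$.
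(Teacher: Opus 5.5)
\textbf{There is a genuine gap: the pointwise bound, which carries the lemma, is not proved.} Your lower bound (association for the nonincreasing map $y\mapsto\PP(X\le s\mid Y=y)$) is correct, and so is your middle-pair identity. The averaging step can also be completed, giving $\EE G(Z,Z)\le4\ep$. Note, though, that what does the work there is the ordering of $X$ and of $Y$ against $Z$, not of $X$ against $Y$: it places $H_X(Z)$ and $H_Y(Z)$ each on one side of $W=H_Z(Z)$, with means within $3\ep$ of $1/2$. The step you call the main obstacle is left open, and your suggested routes do not close it. $G(m,m)$ vanishes at both ends of the diagonal, so it is not monotone. The same holds for $G(m,m)/\min(a,b)$, e.g.\ for a Gaussian copula with $0<\rho<1$. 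Even granting $G(s,t)\le G(m,m)$, you would still have to turn one large diagonal value into an $O(\ep)$ contradiction with the average. As you note, Lipschitz bounds do that only with a square-root loss.

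\textbf{The missing idea is that the two-sided conditional monotonicity is a concavity statement in quantile coordinates.} Put $U=H_X(X)$, $V=H_Y(Y)$ and $D(u,v)=\PP(U\le u,V\le v)-uv$. Then $\partial_uD(u,v)=\PP(V\le v\mid U=u)-v$ is nonincreasing in $u$, and symmetrically in $v$. So $D$ is concave in each coordinate, with $D=0$ on the boundary of $[0,1]^2$; this also gives $D\ge0$ at once. A concave $f$ on $[0,1]$ with $f(0)=f(1)=0$ satisfies $f(t)\ge\min\{t,1-t\}f(u)$. Applying this in each coordinate gives $D(t,t)\ge\min\{t,1-t\}^2D(u,v)$ for all $t,u,v$. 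This is exactly the proportional-contribution statement you were looking for: since $\int_0^1\min\{t,1-t\}^2\,dt=1/12$, it yields $D(u,v)\le12\int_0^1D(t,t)\,dt$.

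\textbf{Getting the constant $48$.} Do the averaging on the true diagonal of the $(U,V)$-square rather than in $Z$-quantile coordinates. Stochastic ordering makes $\{X<Z\}$ and $\{U<W\}$ nested, so they differ with probability $|\PP(X<Z)-1/2|\le3\ep$; likewise for $Y$. Hence $1/3-2\int_0^1D(t,t)\,dt=\PP(\min(U,V)<W<\max(U,V))\ge1/3-8\ep$, i.e.\ $\int_0^1D(t,t)\,dt\le4\ep$, and so $D\le48\ep$. Your $\EE G(Z,Z)$ is instead the integral of $D$ along the curve $w\mapsto(H_X(H_Z^{-1}(w)),H_Y(H_Z^{-1}(w)))$. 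The concavity bound still applies along that curve, but it only gives $48\ep/(1-36\ep)$.
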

\begin{proof}
Write $H_X,H_Y,H_Z$ for the distribution functions, and put
$U=H_X(X)$, $V=H_Y(Y)$, $W=H_Z(Z)$. These variables are uniform
on $[0,1]$, with $W$ independent of $(U,V)$. Stochastic ordering
makes $\{X<Z\}$ and $\{U<W\}$ nested almost surely, so
$$
 \PP\bigl(\{X<Z\}\mathbin{\triangle}\{U<W\}\bigr)
 =|\PP(X<Z)-1/2|\le3\ep.
$$
The same holds for $Y,Z$. Thus
$q:=\PP(\min(U,V)<W<\max(U,V))\ge1/3-8\ep$.
Put $D(u,v)=\PP(U\le u,V\le v)-uv$. Conditional monotonicity
makes $D$ concave in each coordinate, with zero boundary values,
so $D\ge0$. Independence of $W$ gives
$$
 \int_0^1D(t,t)\,dt=\f{1/3-q}{2}\le4\ep.
$$
Concavity in the two coordinates also gives
$D(t,t)\ge\min\{t,1-t\}^2D(u,v)$ for all $u,v,t\in[0,1]$.
Integrating yields $D(u,v)\le48\ep$; take $u=H_X(s)$ and
$v=H_Y(t)$ to obtain~\eqref{eq:counter-rectangles}.
\end{proof}

The order-polytope density is log-supermodular, since its support is closed
under coordinatewise minimum and maximum. The Ahlswede--Daykin
Four-Functions theorem~\cite{AD}, applied on grids and then by approximation,
gives the same property for each two-coordinate marginal density $g$:
\[
 g(x,s)g(x',t)\ge g(x,t)g(x',s)
 \qquad(x<x',\ s<t).
\]
Integrating over $s\le a<t$ and normalizing shows that
$\PP(Y>a\mid X=x)$ is nondecreasing in $x$. Exchanging $X,Y$
gives conditional stochastic monotonicity in both directions.

\begin{proof}[Proof of triple bound in Theorem~\ref{thm:counterexample}]
The coordinate laws of $P_r$ are totally stochastically ordered:
this holds for a chain, series sums preserve it by their common
increasing affine embeddings, and parallel duplication repeats the same
list of laws. Put $n=n_r$, $p=p_r$, $b=2n+2p$. An old copy preceded
by $a\in\{p,p+n\}$ has embedding
\begin{equation}\label{eq:counter-dirichlet}
 F'_i=L+TF_i,\qquad
 (L,T,1-L-T)\sim\operatorname{Dirichlet}(a,n+1,b-a-n),
\end{equation}
independently of its old coordinates. Total variance and Dirichlet
moments give
$$
 2b\Var(F'_i)\le
 \f{2b(n+1)(n+2)}{n(b+1)(b+2)}\,n\Var(F_i)+1/2.
$$
The coefficient tends to $1/(1+\th)<1$, and end-chain coordinates
have scaled variance at most $1/2$. For sufficiently large $N$,
iteration therefore gives
\begin{equation}\label{eq:counter-variance}
 \sup_{r,x\in P_r}n_r\Var(F_x)\le C_\th.
\end{equation}

Consider an antichain triple with least ordering probability $1/6-\xi$.
Restrict it to its smallest recursive component $P_s$, preserving its
ordering law, and put $n=n_{s-1}$, $p=p_{s-1}$, $b=2n+2p$,
so $m=|P_s|=2b$. Two elements $x,y$ share an old copy in one branch,
while $z$ lies in the other and is independent of them.
In~\eqref{eq:counter-dirichlet}, put $R=L+T$. The old covariance is nonnegative by FKG, and $\Var L,\Var R\ge\Cov(L,R)$.
Since conditional means are convex combinations of $L,R$, total covariance gives
\begin{equation}\label{eq:counter-shared-covariance}
 m\Cov(F_x,F_y)\ge 2b\Cov(L,R)
 =\f{2b\,p(n+p)}{(b+1)^2(b+2)}\ge c_\th>0.
\end{equation}

Put $A=\s m(F_x-\mu_x)$ and $B=\s m(F_y-\mu_y)$. Then
$\Var A,\Var B\le C_\th$ and $\Cov(A,B)\ge c_\th$.
The tail bound~\eqref{eq:logconcave-tails} and Cauchy--Schwarz allow a fixed
$K=K_\th\ge\s{C_\th}$ such that clipping both variables to $[-K,K]$
leaves covariance at least $c_\th/2$.
Apply Lemma~\ref{lem:counter-independence} to $(F_x,F_y,F_z)$.
Its rectangle bound is preserved by separate increasing affine changes
of coordinates. Integrating the indicator covariances over $[-K,K]^2$
therefore gives
$$
 c_\th/2\le\Cov\bigl(T_{[-K,K]}(A),T_{[-K,K]}(B)\bigr)
 \le48\xi(2K)^2.
$$
Hence $\xi\ge\et_\th:=c_\th/(384K_\th^2)>0$, uniformly in the
depth and in sufficiently large seed sizes. Since $\et_\th\le1/384$,
triples containing a comparable pair satisfy the same bound trivially. Finally, choose $\th$ so that $a_\th>1/2-\ep$, and fix such an $N$.
Equation~\eqref{eq:counter-result}, together with $\de(P_r)=1/2$
for $r\ge1$, proves Theorem~\ref{thm:counterexample}.
\end{proof}

\subsection{A maximum antichain without balanced triples}
\begin{proof}[Proof of Theorem~\ref{thm:local-width-counterexample}]
Take $H=P_r$ from the preceding construction, with $M=|H|$ and
$w=2^r=\Th(M^{\be_\th})$. Choose one seed-chain vertex and,
at each step, its descendants in the first old copy of each branch.
They form a maximum antichain $D$ of size $w$. Symmetry gives
identical marginals on $D$, and \eqref{eq:counter-variance} gives
$$\EE F_x=\mu,\qquad \Var(F_x)\le C_\th/M\quad(x\in D).$$

Fix a sufficiently large constant $L=L(\th)$, and put
$$h=\lc L\s M\rc,\wsp T=\lf M/(2h)\rf.$$
Put
\begin{equation}\label{eq:local-counter-construction}
 Q_j=C_{jh}\op H\op C_{M-jh}\hsp(1\le j\le T),\wsp P=Q_1+\cds+Q_T,\wsp A=\bcup_{j=1}^T D^{(j)}.
\end{equation}
Since the first vertices of the prefix chains are independent and identically
distributed, any $k$ of them have all $k!$ orders equally likely, so $\de_k(P)=1/k!$ whenever $T\ge k$.

Every branch has size $2M$ and width $w$, so $A$ is a maximum antichain. Its groups have coordinate mean spacing $h/(2M+1)$, while all their
variances are $O_\th(1/M)$:
\begin{equation}\label{eq:local-counter-marginals}
 \mu_j=\f{jh+(M+1)\mu}{2M+1},\qquad
 \Var(F_x)\le C_1/M\quad(x\in D^{(j)}).
\end{equation}
Indeed the embedding is $F'_x=U+VF_x$, with
$(U,V,1-U-V)\sim\operatorname{Dirichlet}(jh,M+1,M-jh)$;
its added variance is at most $1/[4(2M+2)]$.
For $i<j$, independence across branches and Chebyshev give
$$
 \PP(F_y<F_x)\le18C_1/L^2\le1/4
 \qquad(x\in D^{(i)},\ y\in D^{(j)}),
$$
once $L$ is large enough. Any triple meeting different branches
has three orders containing such a reversed pair, so one has
probability at most $1/12$. A triple within a branch retains its
ordering law in $H$. Therefore
\begin{equation}\label{eq:local-counter-deficit}
 \de_3(A;P)\le1/6-\et',\qquad
 \et'=\min\{\et_\th,1/12\}>0.
\end{equation}

Since $T=\Th(\s M)$, we have
$$n=2MT=\Th(M^{3/2}),\wsp |A|=Tw=\Th\ll(n^{(1+2\be_\th)/3}\rr).$$

Each branch has $M$ end-chain elements, each contributing
$1/(2M+1)$, while the old $c_x$ values are multiplied by
$(M+1)/(2M+1)$. Thus
\begin{equation}\label{eq:local-total-window}
 S_P=T\f{(M+1)S_H+M}{2M+1}
     =\Th\bigl(n^{2\be_\th-1/3}\bigr).
\end{equation}
Choose $\th$ so that $\be_\th>1/2-\ep/2$. Then both
$|A|$ and $S_P$ are $\om(n^{2/3-\ep})$, as required.
\end{proof}

\section{Reduction to small gaps}\label{sec:width-compactness}

For every nonempty $B\se P$, define its height span and its diameter in $d$ by
\[
 H(B):=\max_{x,y\in B}\Delta(x,y),\qquad
 D(B):=\max_{x,y\in B}d(x,y).
\]
We write $H_P(B)$ and $D_P(B)$ when the underlying poset is unclear.

\begin{lemma}\label{cp:lem:antichain-dispersion}
Fix $k\ge2$ and $\ep>0$. There are constants $C>0$ and $\et>0$ such that, if $\de_k(P)<1/k!-\ep$, every nonempty antichain $B\se P$ satisfies
\begin{equation}\label{cp:eq:antichain}
 |B|\le C\ll(1+H(B)+D(B)\rr)^{1-\et}.
\end{equation}
\end{lemma}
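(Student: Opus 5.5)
The plan is to play a lower bound on the total window size of $B$, coming from the antichain inequality, against an upper bound coming from the doubling property of Lemma~\ref{cp:lem:doubling}. All quantities will be measured in rank units. Put $\tilde c_x=(n+1)c_x$ and $R=1+H(B)+D(B)$.

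\emph{Lower bound.} I will use the ingredients behind \eqref{eq:antichain} directly rather than its final form.
\begin{itemize}
\item From $c_x\ge e(P)/(2n\,e(P-x))$ and $\sum_{x\in B}e(P-x)\le e(P)$ (valid since $B$ is an antichain) we get $\sum_{x\in B}1/(n c_x)\le 2$.
\item Cauchy--Schwarz then gives
$$\sum_{x\in B}\tilde c_x\ \ge\ \sum_{x\in B}n c_x\ \ge\ |B|^2/2 .$$
\end{itemize}
The point is that this bound does not involve $n$.

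\emph{Upper bound via packing.} I will bound $\sum_{x\in B}\tilde c_x$ using the level sets $B_u=\{x\in B:\tilde c_x\ge u\}$.
\begin{itemize}
\item \emph{A ceiling on $\tilde c_x$.} For distinct $x,y\in B$, \eqref{eq:fiber} gives $d(x,y)\ge \tilde c_x/\sqrt3$. Hence $\tilde c_x\le\sqrt3\,D(B)\le\sqrt3 R$ whenever $|B|\ge2$; the case $|B|=1$ is trivial.
\item \emph{Separation.} Again by \eqref{eq:fiber}, distinct points of $B_u$ are at $\ell$-distance at least $u/\sqrt3$.
\item \emph{Confinement.} All of $B$ lies in a single $\ell$-ball of radius $\le R$ centered at any point of $B$.
\item \emph{Counting.} Since $\de_k(P)<1/k!-\ep$, $P$ has no $\ep'$-pseudo antichain for a suitable $\ep'$ (via the remark after the definition of $\de_k$, e.g.\ $\ep'=\ep$ works up to adjusting constants). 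So Lemma~\ref{cp:lem:doubling} applies. Iterating it $O(\log_2(R/u))$ times covers the ball by balls of radius $<u/(2\sqrt3)$, each containing at most one point of $B_u$. This gives
$$|B_u|\le C\,(R/u)^{D'},\qquad D'=\log_2 N_{k,\ep}.$$
\end{itemize}

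\emph{Integrating the count.} I will integrate $\min\{|B|,\,C(R/u)^{D'}\}$ over $0<u\le\sqrt3R$, splitting at $u_0$ where the two bounds agree, i.e.\ $u_0\asymp R\,|B|^{-1/D'}$.
\begin{itemize}
\item The part $u\le u_0$ contributes at most $|B|\,u_0\asymp R\,|B|^{1-1/D'}$.
\item The part $u\ge u_0$ contributes $\int_{u_0}^{\infty}C(R/u)^{D'}\,du=O(R^{D'}u_0^{1-D'})$, which is of the same order, provided $D'>1$ (take $N\ge4$ without loss of generality).
\end{itemize}
Thus $\sum_{x\in B}\tilde c_x\le C'R\,|B|^{1-1/D'}$. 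Combining with the lower bound gives $|B|^{1+1/D'}\le 2C'R$, that is,
$$|B|\le C\,R^{D'/(D'+1)}.$$
This is \eqref{cp:eq:antichain} with $\et=1/(D'+1)$.

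I expect the main obstacle to be the bookkeeping around the covering step. One must check that Lemma~\ref{cp:lem:doubling} is applicable from the hypothesis $\de_k(P)<1/k!-\ep$, which is only stated for $\ep$-pseudo antichains, so the error parameters need to be matched. One must also confirm that the covering centers can be taken with the separation scale $u/\sqrt3$ compatible with the ball radii; if needed, this uses the triangle inequality for $\ell$, which follows from \eqref{eq:sub-add}. The lower bound itself should be routine, since it is exactly the argument cited for \eqref{eq:antichain}, stopped before the normalization by $n$.
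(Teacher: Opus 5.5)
Your argument is correct and uses the same ingredients as the paper: the lower bound $\sum_{x\in B}(n+1)c_x\ge|B|^2/2$ from \eqref{eq:antichain}, the separation from \eqref{eq:fiber}, and packing via Lemma~\ref{cp:lem:doubling}. That lemma already applies under $\delta_k(P)<1/k!-\varepsilon$, since an $\varepsilon$-pseudo antichain gives every order probability at least $1/k!-\varepsilon$, so no parameter matching is needed. The difference is only bookkeeping: instead of integrating level-set counts over all scales, the paper works at the single scale $u=|B|/4$ (at least half of $B$ has $(n+1)c_x\ge|B|/4$, else \eqref{eq:antichain} applied to the rest fails) and packs those $|B|/(4\sqrt3)$-separated points in a set of $\ell$-diameter at most $H(B)+D(B)$, which avoids the layer-cake integral and the need for $D'>1$.
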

\begin{proof}
Put $m=|B|$, $T=H(B)+D(B)$, and
$$A=\{x\in B:(n+1)c_x\ge m/4\}.$$
We have $|A|\ge m/2$: otherwise $q=|B\x A|>m/2$, and \eqref{eq:antichain} gives
$$q^2/2\le\sum_{x\in B\setminus A}(n+1)c_x<qm/4,$$
a contradiction. By~\eqref{eq:fiber}, distinct $x,y\in A$ satisfy
$\ell(x,y)\ge d(x,y)\ge m/(4\sqrt3)$, while the $\ell$-diameter of $A$
is at most $T$. The doubling bound in Lemma~\ref{cp:lem:doubling}
therefore gives constants $C_0,p>0$, depending only on $k,\ep$, such that
\[
 m\le C_0(1+T/m)^p.
\]
If $T\le m$, this bounds $m$ by a constant. If $T>m$, it gives
$m^{p+1}\le C_0 2^p T^p$. Thus in both cases
$m\le C(1+T)^{p/(p+1)}$, proving the claim with $\eta=1/(p+1)$.
\end{proof}

If the elements of $P$ are indexed so that $h(x_1)\le h(x_2)\le \dots\le h(x_n)$, then let the \em{gap} of $P$ be
$$G(P)=\max(h(x_1),h(x_2)-h(x_1),\dots,h(x_n)-h(x_{n-1}),n+1-h(x_n)).$$
Also, put
\begin{equation}\label{cp:eq:R}
 R(P)=\max\bigl(\{1\}\cup\{d(x,y):\Delta(x,y)\le d(x,y)\}\bigr).
\end{equation}
We write $R=R(P)$ and $G=G(P)$ when the poset $P$ is clear.

\begin{lemma}\label{cp:lem:scale}
Put $M=\max(R,G)$. For all vertices $x,y$,
\begin{equation}\label{cp:eq:diffusiveM}
 d(x,y)^2\le 4M\bigl(\Delta(x,y)+M\bigr),
\end{equation}
and for every interval $I$ of length $M$, $|\{z\in P:h(z)\in I\}|\le 8M+1$.
\end{lemma}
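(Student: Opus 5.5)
The plan is to prove \eqref{cp:eq:diffusiveM} by chaining through intermediate vertices and applying the subadditivity \eqref{eq:sub-add}. The counting bound then follows from a second-moment argument on the integer ranks $f$.

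\emph{The diffusive bound.} Fix $x,y$ with $h(x)\le h(y)$.
\begin{itemize}
\item If $\Delta(x,y)\le d(x,y)$, then $d(x,y)\le R\le M$ by definition of $R$, and we are done.
\item In general, build a chain $x=z_0,z_1,\dots,z_m=y$ with nondecreasing heights. Given $z_i$ with $h(y)-h(z_i)\ge 2M$, let $z_{i+1}$ be the first vertex in height order with $h(z_{i+1})\ge h(z_i)+M$. Since consecutive heights differ by at most $G\le M$, we get $M\le\Delta(z_i,z_{i+1})\le 2M$. Stop once the remaining height difference to $y$ is less than $2M$, and take $y$ as the last vertex.
\item For each step, either $\Delta_i\le d_i$, in which case $d_i\le R\le M$; or $d_i<\Delta_i$. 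So in all cases $d_i\le\max(M,\Delta_i)$.
\item For the full steps, $M\le\Delta_i\le 2M$ gives $d_i^2\le\Delta_i^2\le 2M\Delta_i$. For the final step, $\Delta_m<2M$ gives $d_m^2\le\max(M^2,2M\Delta_m)$.
\item Iterating \eqref{eq:sub-add} along the chain, and using that the $\Delta_i$ telescope to $\Delta(x,y)$ because heights are monotone, yields
\[
d(x,y)^2\le\sum_i d_i^2\le 2M\Delta(x,y)+M^2\le 4M\bigl(\Delta(x,y)+M\bigr).
\]
\end{itemize}

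\emph{The counting bound.} Let $S=\{z:h(z)\in I\}$ and $N=|S|$. For $x,z\in S$ we have $\Delta(x,z)\le M$, so the first part gives
\[
\EE(Z_z-Z_x)^2=d(x,z)^2+\Delta(x,z)^2\le 8M^2+M^2=9M^2.
\]
Under the coupling $Z_x=(n+1)U_{(f(x))}$, we have $\EE[Z_x\mid f]=f(x)$, since $\EE U_{(j)}=j/(n+1)$. Conditional Jensen therefore gives
\[
\EE(f(z)-f(x))^2\le\EE(Z_z-Z_x)^2\le 9M^2.
\]
On the other hand, $f$ restricted to $S$ takes $N$ distinct integer values. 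For any $N$ distinct integers, the sum of squared differences over ordered pairs is at least that of $1,\dots,N$, namely $N^2(N^2-1)/6$. This holds pointwise, so it holds in expectation. Comparing the two bounds,
\[
\frac{N^2(N^2-1)}6\le 9M^2N^2,
\]
so $N^2\le 54M^2+1$. Hence $N\le\sqrt{54}\,M+1\le 8M+1$.

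The main point needing care is the chain construction. We must make sure each step has $\Delta_i\in[M,2M]$, which is exactly what the gap bound $G\le M$ supplies, and we must handle the terminal step separately. The rest is routine: the Jensen comparison between $f$ and $Z$, and the elementary identity for sums of squared differences of distinct integers.
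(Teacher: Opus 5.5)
Your proof is correct. For the diffusive bound you follow the paper's route: a threshold chain whose full steps advance the height by between $M$ and $M+G\le 2M$, the observation $d\le\max\{R,\Delta\}$, and iterated sub-additivity \eqref{eq:sub-add}. Your bookkeeping differs slightly. You bound full steps by $\Delta_i^2\le 2M\Delta_i$ and the last step by $M^2+2M\Delta_m$, which gives the sharper $2M\Delta(x,y)+M^2$. The paper instead counts steps, $q\le 1+\Delta/M$, and bounds each by $(2M)^2$.

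The counting bound is where you genuinely differ. The paper fixes one anchor $b$ in the interval and uses Chebyshev to get $\PP(|Z_z-Z_b|\le 2M)\ge 1/2$ for every other $z$ there. It then bounds the expected number of $z$ with $Z_z-Z_b\in[-2M,2M]$ by $4M$, by conditioning on the extension and summing the $(n+1)\operatorname{Beta}(k,n+1-k)$ densities of the spacing sums, which total at most $n/(n+1)$ per half-line.

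You instead project to integer ranks by conditional Jensen, $\EE[Z_z-Z_x\mid f]=f(z)-f(x)$. You then use the deterministic fact that $N$ distinct integers satisfy $\sum_{x,z}(a_x-a_z)^2\ge N^2(N^2-1)/6$. This avoids the Beta-density identity entirely. It replaces a first-moment argument around a single anchor with a second-moment argument over all pairs, and the only probabilistic input is Jensen.

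Your version gives $N\le\sqrt{54}\,M+1$. If you use $d(x,z)\le\max\{R,\Delta(x,z)\}\le M$ directly rather than the diffusive bound, it even gives $N\le\sqrt{12}\,M+1$. The paper's version yields, as a byproduct, the bound $\sum_{z\ne b}\PP(Z_z-Z_b\in I)\le|I|$ for arbitrary intervals $I$. For the lemma as stated, your argument is equally valid and somewhat more elementary.
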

\begin{proof}
First, the definition of $R$ gives, for any vertices $u,v$,
\[
 d(u,v)\le\max\{R,\Delta(u,v)\}.
\]
Indeed, if $d(u,v)\ge\Delta(u,v)$, this pair enters the maximum
defining $R$, so $d(u,v)\le R$. Otherwise $d(u,v)<\Delta(u,v)$.

The case $x=y$ is immediate. For distinct $x,y$, interchange them
if necessary so that $h(x)\le h(y)$, and list the vertices between
them in nondecreasing height order, breaking ties arbitrarily.
Starting at $x_0=x$, choose the next anchor to be the first vertex
whose height is at least $M$ larger than the current anchor's height.
If no such vertex occurs before or at $y$, finish with $y$ as the
last anchor. Stop as soon as $y$ has been selected, and write the
resulting anchors as $x_0,\ldots,x_q=y$. Every step except possibly the last advances the height by at least $M$. Since the advances sum to $\Delta(x,y)$, we have
\[
 (q-1)M\le\Delta(x,y),\qquad
 q\le1+\frac{\Delta(x,y)}M.
\]
A step chosen by the threshold rule advances the height by at most
$M+G\le2M$: just before the first vertex crossing the threshold,
the advance is less than $M$, and the next gap is at most $G$.
A final step that does not reach the threshold has advance less
than $M$. Thus every anchor step satisfies
\[
 d(x_{i-1},x_i)
 \le\max\{R,\Delta(x_{i-1},x_i)\}\le2M.
\]
Equation~\eqref{eq:sub-add} now gives
$$d(x,y)^2\le\sum_{i=1}^q d(x_{i-1},x_i)^2\le q(2M)^2\le4M^2\left(1+\f{\De(x,y)}M\right)=4M\bigl(\Delta(x,y)+M\bigr).$$

This proves~\eqref{cp:eq:diffusiveM}. For the vertex count, choose an anchor $b$ in a nonempty interval of length $M$. For each other vertex $z$ there,
$d(z,b)\le\max\{R,\Delta(z,b)\}\le M$, hence
$\EE(Z_z-Z_b)^2\le2M^2$ and $\PP(|Z_z-Z_b|\le2M)\ge1/2$.

Condition on the linear extension and write $f(b)=j$. The positive differences
from $Z_b$ have distributions $(n+1)\operatorname{Beta}(k,n+1-k)$,
$1\le k\le n-j$. Their summed density is bounded by the sum over
$1\le k\le n$, which before scaling is
\[
 \sum_{k=1}^n
 \f{n!}{(k-1)!(n-k)!}t^{k-1}(1-t)^{n-k}=n.
\]
Thus the summed density is at most $n/(n+1)$ on the positive half-line;
the negative half-line is identical. Averaging over the extension gives,
for every interval $I\subset\mathbb R$,
\[
 \sum_{z\ne b}\PP(Z_z-Z_b\in I)\le\f n{n+1}|I|.
\]
Apply this to $I=[-2M,2M]$. Each vertex in the chosen interval other
than $b$ contributes at least $1/2$, so there are at most $8M$ such vertices.
Including $b$ gives the asserted bound $8M+1$.
\end{proof}

\Needspace{12\baselineskip}
\begin{lemma}\label{cp:lem:localization}
Fix $k\ge2$ and $\ep,\theta>0$. For all sufficiently large
$M=\max\{R(P),G(P)\}$, if $P$ has an internal height gap of size at least
$\th M$, then $\de_k(P)\ge1/k!-\ep.$
\end{lemma}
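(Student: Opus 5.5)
The plan is to turn the internal height gap into a large antichain whose elements are all close to the gap in the $\ell$-metric, and then show that such an antichain cannot exist when $\de_k(P)<1/k!-\ep$. We argue by contradiction throughout, assuming $\de_k(P)<1/k!-\ep$, so that Lemma~\ref{cp:lem:doubling} and Lemma~\ref{cp:lem:antichain-dispersion} are available.

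\textbf{Step 1 (an antichain from the gap).} Let the gap lie between consecutive heights $h(x_i)<h(x_{i+1})$, with $h(x_{i+1})-h(x_i)\ge\th M$. Put $I=\{x:h(x)\le h(x_i)\}$. Heights strictly increase along relations, so $I$ is a nonempty proper ideal. Applying \eqref{eq:local-gap-input} to $I$ gives
$$|\max(I)|+|\min(I^c)|\ge\th M+1.$$
Hence one of the antichains $\max(I)$, $\min(I^c)$ has at least $\th M/2$ elements. By duality we may assume it is $B:=\max(I)$.

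\textbf{Step 2 (the near part).} Fix a large constant $K=K(k,\ep,\th)$ and set
$$B_0=\{x\in B:h(x)\ge h(x_i)-KM\}.$$
Then $H(B_0)\le KM$. By \eqref{cp:eq:diffusiveM}, every pair in $B_0$ satisfies $d\le 2M\sqrt{K+1}$, so $H(B_0)+D(B_0)=O_K(M)$. Lemma~\ref{cp:lem:antichain-dispersion} then gives $|B_0|\le C(1+O_K(M))^{1-\et}$. Since $M$ is large, this is $o(M)$. Consequently, if $|B_0|\ge\th M/4$ we have the required contradiction.

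\textbf{Step 3 (the far part; the main obstacle).} It remains to rule out $|B\setminus B_0|\ge\th M/4$. Every $x\in B\setminus B_0$ is maximal in $I$, so each $y>x$ lies in $I^c$. Such $y$ therefore have $h(y)\ge h(x_{i+1})$, which gives $\Delta(x,y)\ge (K+\th)M$.

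The idea is to show that such an $x$ has a large window, $(n+1)c_x\gtrsim KM$. Each difference $Z_y-Z_x$ with $y>x$ has mean at least $(K+\th)M$. By \eqref{cp:eq:diffusiveM} its fluctuations are only of order $\sqrt{M\Delta}$, and its tails decay exponentially by \eqref{eq:logconcave-tails}. Controlling $R_x=\min_{y>x}F(y)$ needs more than a union bound over all $y$. Instead I would only union bound over the $y$ that are plausible minimizers. I would group these by height windows of length $M$; each window has at most $8M+1$ elements by Lemma~\ref{cp:lem:scale}. Across windows at increasing height distance the tails are summable, because the mean grows linearly in $\Delta$ while the spread grows only like $\sqrt{\Delta}$.

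Granting $(n+1)c_x\ge cKM$ for far $x$, \eqref{eq:fiber} makes the far elements pairwise $\ell$-separated at scale $\gtrsim KM$. The count of far elements would then be handled by rerunning the argument of Step 2 one dyadic scale up. I would partition $B\setminus B_0$ by height into windows of length $2^jKM$. In each window, Lemma~\ref{cp:lem:doubling} applies at separation scale comparable to the window length, because the fiber bound grows with the distance to the gap. So each window contributes $O(1)$ elements, and the window-$j$ bound should improve to $O(2^{-j\et'})\cdot M$ so that the sum over $j$ is $o(M)$. Combining this with Step 2 would contradict $|B|\ge\th M/2$ and prove the lemma.

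I expect this far-part estimate to be the real difficulty. Specifically, the hard part is controlling $R_x$, and then turning the per-window bounds into a total that is $o(M)$. If the direct estimate on $R_x$ fails, my fallback is to replace $I$ by the ideal generated by $B_0$ together with the elements of height at least the threshold. That replacement is meant to shrink the maximal antichain, so that \eqref{eq:local-gap-input} applies to a set already concentrated near the gap.
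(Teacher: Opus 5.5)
\textbf{Steps 1 and 2 are sound, but Step 3 fails at a constant cutoff $K$, and this is where a new idea is needed.}

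Group the $y>x$ into height windows of length $M$. Each window can hold up to $8M+1$ vertices (Lemma~\ref{cp:lem:scale}). Restricting to upper covers of $x$ only improves this to order $M^{1-\eta}$, which does not change anything below. So your union bound gives only
\[
 \PP\Bigl(\min_{y>x}(Z_y-Z_x)<KM/2\Bigr)\le\sum_{j\ge K}(8M+1)\,e^{1-c\sqrt j}\approx M\sqrt K\,e^{-c\sqrt K}.
\]
The tails are summable in $j$, but the factor $M$ per window keeps this from being below $1$ unless $\sqrt K\gtrsim\log M$. So $(n+1)c_x\gtrsim KM$ is not established for constant $K$.

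Nor should you expect it. Every vertex satisfies $(n+1)c_x\le\sqrt3\,d(x,y)\le\sqrt3M$ for a height-neighbour $y$, by \eqref{eq:fiber} and $d\le\max\{R,\Delta\}$. Your claim would therefore say that no element of $\max(I)$ lies more than $KM$ below the gap. The tail estimates only exclude distances beyond order $M\log^2M$.

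The dyadic count that follows is also unsupported. The number of windows of length $2^jKM$ below the gap is not bounded in terms of $M$, since $n$ is not. So $O(1)$ elements per window does not sum to $o(M)$, and the decay $O(2^{-j\eta'})M$ is not derived from anything.

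\textbf{The missing ingredient is the scale $L=AM\log^2M$, with $A$ a large constant.}

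With it, your Step 2 still works. By \eqref{cp:eq:diffusiveM}, $D(B_0)=O(M\log M)$, and Lemma~\ref{cp:lem:antichain-dispersion} has a polynomial saving, so $|B_0|=O((M\log^2M)^{1-\eta})=o(M)$.

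Step 3 then becomes vacuous. Take $x\in\max(I)$ with $h(x)\le h(x_i)-L$. Every $y>x$ lies in $I^c$, hence at height distance at least $L$ from $x$. The window union bound now gives $\min_{y>x}(Z_y-Z_x)\ge L/2$ with probability at least $1-M^{-4}$. Hence $(n+1)c_x\ge L/3>\sqrt3M$, which is impossible. So no such $x$ exists, and dually for $\min(I^c)$.

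The paper uses the same scale but localizes differently. In your notation, set $u=x_i$, $v=x_{i+1}$ and $J=I^c$. The paper conditions on the event, of probability at least $1-M^{-4}$, that every vertex of height at most $h(u)-L$ lies below $u$ and every vertex of height at least $h(v)+L$ lies above $v$. This gives an extension $Q'$ of $P$ in which $\max_{Q'}I$ and $\min_{Q'}J$ lie within $L$ of the gap by construction. It then checks, via \eqref{eq:weak-conditioning} and \eqref{cp:eq:diffusiveM}, that the gap survives as at least $g/2$, and applies \eqref{eq:local-gap-input} in $Q'$.

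Your fallback points in this direction, but changing the ideal inside $P$ would not work. Far maximal elements that are not below the new generators fall into the complement of the new ideal. Then the left side of \eqref{eq:local-gap-input} is no longer controlled by the gap. The modification has to be made to the poset, by adding relations that hold with high probability, not to the ideal. Alternatively, use the universal bound $(n+1)c_x\le\sqrt3M$ as above.
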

\begin{proof}
Suppose instead that $\de_k(P)<1/k!-\ep$. Let $g\ge\theta M$ be the
size of an internal height gap, with endpoints $u,v$, so
$h_P(v)-h_P(u)=g$, and put
\[
 I=\{z:h_P(z)\le h_P(u)\},\qquad J=P\setminus I,
 \qquad L=AM\log^2M,
\]
where $A$ is a sufficiently large absolute constant.
Use the original coordinates $Z_z=(n+1)F(z)$ throughout.
By Lemma~\ref{cp:lem:scale} and~\eqref{eq:logconcave-tails}, applied to
$Z_y-Z_x-H$, for
$H=h_P(y)-h_P(x)\ge CM$,
\[
 \PP_P(Z_y<Z_x)
 \le\exp\left(1-\f{H}{d_P(x,y)}\right)
 \le C\exp(-c\sqrt{H/M}).
\]
Form $Q'$ by adjoining, all at once, the comparisons
\[
 z<u\quad\text{if }h_P(z)\le h_P(u)-L,
 \qquad
 v<z\quad\text{if }h_P(z)\ge h_P(v)+L.
\]
These comparisons stay within $I$ and $J$. Let $E$ be the event that all comparisons are satisfied for $F$.
Partition the values below $h_P(u)-L$ and above $h_P(v)+L$ into intervals of length $M$. Each interval contains at most $8M+1$ vertices by
Lemma~\ref{cp:lem:scale}. The union bound therefore gives
$$\PP_P(E^c)\le CM\sum_{j\ge\lfloor L/M\rfloor}e^{-c\sqrt j}\le M^{-4},$$
provided $A$ and then $M$ are sufficiently large.
Thus $E$ has positive probability, the added comparisons are consistent,
and conditioning gives the uniform order-polytope law of $Q'$. Since the
ground set and rank scale are unchanged, $h_{Q'}(z)=\EE_P[Z_z\mid E]$.
No comparison is added from $J$ to $I$, so $I$ remains an ideal.

For $s\in I,t\in J$, write $H=h_P(t)-h_P(s)\ge g\ge\theta M$.
Equation~\eqref{cp:eq:diffusiveM} gives
$d_P(s,t)/H\le C_\theta$.
Putting $q=\PP_P(E^c)\le M^{-4}$ in~\eqref{eq:weak-conditioning}, we obtain
\[
 \left|\frac{h_{Q'}(t)-h_{Q'}(s)}{h_P(t)-h_P(s)}-1\right|
 \le C_\theta\sqrt{\frac q{1-q}}\le\frac12
\]
for sufficiently large $M$ in terms of $\theta$. Hence
\[
 \min_{t\in J}h_{Q'}(t)-\max_{s\in I}h_{Q'}(s)\ge g/2.
\]

Every vertex of $I$ whose original height is at most $h_P(u)-L$ now lies
below $u$, and every vertex of $J$ whose original height is at least
$h_P(v)+L$ now lies above $v$. Thus the original heights from
$\max_{Q'}I$ lie in $(h_P(u)-L,h_P(u)]$, and those from
$\min_{Q'}J$ lie in $[h_P(v),h_P(v)+L)$. By applying~\eqref{eq:local-gap-input} in $Q'$, we have
$$g/2\le |\max_{Q'}I|+|\min_{Q'}J|-1.$$

Choosing the larger of these antichains gives $B\se P$ with
\begin{equation}\label{cp:eq:localized-antichain}
 |B|\ge g/4\ge\theta M/4,\qquad H_P(B)\le L=AM\log^2M.
\end{equation}
Equation~\eqref{cp:eq:diffusiveM} then gives $D_P(B)=O(M\log M)$.
Lemma~\ref{cp:lem:antichain-dispersion}, for a fixed $\eta=\eta(k,\ep)>0$,
instead gives
\[
 |B|=O_{\theta,k,\ep}\bigl((M\log^2M)^{1-\eta}\bigr)=o(M),
\]
contradicting~\eqref{cp:eq:localized-antichain} for sufficiently large $M$.
\end{proof}

\begin{theorem}\label{thm:small-gaps}
For every fixed $k\ge2$, $t\ge1$, and $\ep>0$, there exists
$R_*=R_*(k,t,\ep)>0$ such that every finite poset $P$ with
$G(P)<R(P)$ and $R(P)\ge R_*$ contains either an $\ep$-pseudo antichain of
size $k$ or an $\ep$-pseudo insertor of size $t+1$.
\end{theorem}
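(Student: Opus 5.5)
Assume throughout that $P$ contains no $\ep$-pseudo antichain of size $k$, so that $\de_k(P)<1/k!-\ep$ and Lemmas~\ref{cp:lem:doubling}, \ref{cp:lem:antichain-dispersion} and~\ref{cp:lem:localization} all apply. We will then construct an $\ep$-pseudo insertor once $R=R(P)$ is large. Since $G<R$, we have $M=\max(R,G)=R$. By the definition~\eqref{cp:eq:R}, there are $x,y$ with $\Delta(x,y)\le d(x,y)=R$. So $Z_x-Z_y$ has standard deviation $R$ and mean at most $R$, and the pair is diffusive at the maximal scale. We try to make $x$ play the role of the inserted vertex, moving across a window of heights of width about $R$. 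The $y_i$ are taken to be $t$ vertices spread through that window whose relative order is essentially fixed. Both parts of the setup come from Lemma~\ref{cp:lem:scale}: the diffusive bound~\eqref{cp:eq:diffusiveM} gives $d(u,v)\le 2\sqrt2 R$ whenever $\Delta(u,v)\le R$, and each height window of length $R$ holds at most $8R+1$ vertices.

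The first step is a dichotomy between ``rigid'' and ``fluctuating'' vertices in a window $W$ of heights of length $CR$ around $h(x)$. Let $W'$ be the set of vertices $z$ with $h(z)\in W$ and $d(x,z)\ge cR$. If too many vertices of $W'$ also have pairwise $d$-distance of order $R$, then the Koml\'os selection argument (Theorem~\ref{thm:selection}, exactly as in Lemma~\ref{cp:lem:doubling}) applied to $(Z_z-Z_x)/R$ gives a pseudo antichain. By the doubling property, $W'$ is therefore covered by boundedly many $\ell$-balls of radius $\de R$, where $\de$ is small. Lemma~\ref{cp:lem:localization} with $\theta$ small shows that heights in $W$ have no gaps larger than $\theta R$. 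So we can pick $t$ vertices $y_1,\dots,y_t$ with heights spaced about $R/(t+1)$ apart across the central part of the range of $Z_x$. We want them to lie in a single cluster, so that $d(y_i,y_j)\le\de R$. Then $\PP(y_i\prec y_j)\ge1-o_\de(1)$ by log-concave tails (heights differ by about $R/t\gg \de R$), and the chain order on $Y$ holds with probability near $1$.

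The second step is to show that $x$ falls into each of the $t+1$ slots with probability near $1/(t+1)$. Here we use that the cluster moves rigidly: $Z_{y_i}-Z_{y_1}$ is essentially deterministic, so $Z_x-Z_{y_1}$ determines the slot. We need the law of $(Z_x-Z_{y_1})/R$ to be nearly uniform on an interval of length about $R$. The plan is to choose $t$ large but fixed, choose the $y_i$ along all heights, and compare with a continuum. The $t+1$ slot probabilities are $\PP(Z_x-Z_{y_1}\in[a_j,a_{j+1}))$, and we want them equal. Log-concavity alone does not give uniformity. Instead, I would use Lemma~\ref{cp:lem:localization} together with the Haqi bound~\eqref{eq:local-gap-input} applied to ideals of the form $\{z: Z_z<Z_x\}$, conditioning on $F(x)$. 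The idea is that, conditionally on the rigid frame, $x$ is uniform on its window $[Q_x,R_x]$, and the frame elements are equally spaced because gaps are small and density is at most $8R+1$ per window. Concretely, condition on all coordinates except $F(x)$. Then $F(x)$ is exactly uniform on $[Q_x,R_x]$, and it suffices to show that $Q_x,R_x$ land at the ends of the cluster with high probability, with the $y_i$ equally spaced inside it. We choose the $y_i$ as quantile points of the rigid cluster \emph{inside} $x$'s fiber.

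The main obstacle is this last step: producing a configuration where $x$'s fiber $[Q_x,R_x]$ has length of order $R$ and the rigid vertices subdivide it evenly. The pair $(x,y)$ attaining $R$ need not have a large $c_x$. If it fails, I would iterate: replace $P$ by the localized poset $Q'$ as in the proof of Lemma~\ref{cp:lem:localization} and induct on the scale. In the worst case I would use compactness over the bounded-complexity (doubling) structure to extract a limiting object, and argue from the limit.
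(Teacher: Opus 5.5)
You have a genuine gap. Your reduction starts well: assume no pseudo antichain, note $M=R$, take $x,y$ with $\Delta(x,y)\le d(x,y)=R$, and use Koml\'os selection to bound how many $d$-separated fluctuation types a height window of width $O(R)$ can carry. But the step everything rests on is asserted, not proved.

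\textbf{The rigid frame.} You need $t$ vertices whose heights are spread about $R/(t+1)$ apart across a window of width of order $R$, yet which are mutually $d$-close, so that they move as a rigid frame. You only say you ``want them to lie in a single cluster.'' The covering you cite cannot supply this. An $\ell$-ball of radius $\delta R$ has height span at most $2\delta R$, so it never contains two vertices whose heights differ by $R/(t+1)$. If you cover by $d$-balls instead (which the selection argument does permit), you get boundedly many fluctuation types. However, nothing prevents each type from occupying only part of the height window, say one type below $h(x)$ and another above it; then no single cluster contains your $y_i$.

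Ruling this out is the main content of the paper's proof (Stages 1--5):
\begin{itemize}
\item After padding by long chains, the random configuration $\{b_z+W_z\}$, with $A_z=(Z_z-Z_x)/R=b_z+W_z$, has interval counts close to Lebesgue measure.
\item This transfers to the tilted mean configurations $b_z+u_g(z)$ through log-concavity of $c\mapsto\PP(W\in c+B)$.
\item A crossing argument bounds the $u_g$.
\item A coin-flip coloring shows that the density of vertices whose finitely many measurements match those of a reference vertex $p$ is invariant under height shifts.
\end{itemize}
The outcome is the replica property \eqref{cp:eq:replicas}: every height $h\in[-H,H]$ carries some $z$ with $|b_z-h|<\eta$ and $\|W_z-W_p\|_2<\eta$. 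Without an argument of this kind, your first step does not go through.

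\textbf{The slot probabilities.} Your second step is not workable, and it is also unnecessary. Conditioning on all coordinates except $F(x)$ does make $F(x)$ uniform on $[Q_x,R_x]$. But this fiber has expected length $2(n+1)c_x$ in rank units, and \eqref{eq:fiber} only bounds that \emph{above} by a multiple of $d(x,y)$. For example, take two disjoint chains of length $N$ with $k=3$, so no pseudo antichain exists and $G<R$. Every vertex has $(n+1)c_x\approx2$, while $R$ is of order $\sqrt N$. So the fiber need not see your frame at all, and the fallback you suggest (iterating the localization, induction on scale, compactness) is not an argument.

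No uniformity of $x$ is needed. Once replicas of $p$ exist:
\begin{itemize}
\item Pick $v\in\{x,y\}$ with $d_0(v,p)\ge1/2$; this is possible since $d_0(x,y)=1$.
\item Set $V=A_v-W_p$. Its log-concave law has standard deviation in $[1/2,1+\eta]$ and density at most $2\sqrt2$.
\item Place replicas $z_i$ at the $i/(t+1)$-quantiles $q_i$ of $V$. Then $A_{z_i}=W_p+q_i+e_i$ with $\EE e_i^2\le2\eta^2$.
\end{itemize}
The position of $v$ among the $z_i$ then agrees with that of $V$ among the $q_i$ except with probability $O(t\sqrt\eta)$. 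This gives the law $\I_t$ up to small total variation, whatever the shape of the law of $V$.
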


Theorem~\ref{thm:main} follows as a corollary.
\begin{proof}[Proof of Theorem~\ref{thm:main}]
Fix $k\ge2$, $t\ge1$, and $\ep>0$, and put
$\gamma=\min\{\ep/2,1/(2(t+1))\}$ and $\beta=\gamma/(2k!)$.
Suppose $P_0$ contains no $\ep$-pseudo antichain of size $k$.
Adjoin a unique minimum and maximum to obtain $P$; its width and the
relative-order laws on old vertices are unchanged. Any tuple containing
a new vertex has a deterministic pair order, and hence TV distance at
least $1/2$ from $\U_k$. Thus $P$ contains no $\gamma$-pseudo antichain
of size $k$, and $\de_k(P)<1/k!-\beta$.

Let $R_*=R_*(k,t,\gamma)$ be the threshold in Theorem~\ref{thm:small-gaps}, and let $M_0$ be the threshold in Lemma~\ref{cp:lem:localization} for $k,\beta,\theta=1$.
Put $M=\max(R,G)$. For every vertex $x$ and a neighbor $y$ in height order, $(n+1)c_x\le\sqrt3\,d(x,y)\le\sqrt3M$.
The antichain window bound gives $w(P_0)=w(P)\le2\sqrt3M$.
If
\[
 w(P_0)>2\sqrt3\max\{M_0,R_*,1\},
\]
then $M>M_0$, $M>R_*$, and $M>1$. If $G=M$, the largest height gap
is internal, since the endpoint gaps are one. Lemma~\ref{cp:lem:localization}
with $\theta=1$ would then give $\de_k(P)\ge1/k!-\be$, a contradiction.
Thus $G<M$, so $M=R$, $G<R$, and $R>R_*$.
Theorem~\ref{thm:small-gaps} now gives a $\gamma$-pseudo insertor
$Z=\{x,y_1,\ldots,y_t\}$ in $P$, with its defining labelling.

In the target law $\I_t$, the comparison probabilities are
$i/(t+1)$, $1\le i\le t$. Since the TV error is at most $\gamma<\ep$, the selected vertices all belong to $P_0$, so
$$\TV\bigl(\L_{P_0;Z},\I_t\bigr)\le\gamma<\ep.$$
\end{proof}

The same reduction gives a short proof of the $1/e$ bound.
\begin{proof}[Proof that $w(P)\to\oo$ implies $\de(P)\ge1/e-o(1)$]
Suppose instead that $w(P)\to\oo$ and $\de(P)\le1/e-\ep$ for some
fixed $\ep>0$. As in the preceding proof, adjoin one universal minimum
and maximum. Lemma~\ref{cp:lem:localization} and the window bounds then
give $G=o(R)$ and $R\to\oo$, while $\de(P)$ is unchanged.

Choose $x,y$ with $d(x,y)=R$ and $\Delta(x,y)\le R$, and list the
vertices from $x$ to $y$ in nondecreasing height order as
$u_0=x,\ldots,u_m=y$, interchanging $x,y$ if necessary.
Put $\Delta_i=\Delta(u_{i-1},u_i)$ and $d_i=d(u_{i-1},u_i)$.
Equation~\eqref{eq:sub-add} gives
\[
 R^2\le\sum_{i=1}^m d_i^2,\qquad
 \sum_{i=1}^m\Delta_i^2\le G\sum_{i=1}^m\Delta_i
 =G\Delta(x,y)\le GR.
\]
Consequently some consecutive pair $u,v$ satisfies
$\Delta(u,v)/d(u,v)\le\sqrt{G/R}=o(1)$.

For the log-concave variable $V=Z_u-Z_v$, both sides of its mean have
probability at least $1/e$. Indeed, its survival function $S$ is
log-concave, so Jensen gives
$\log S(\EE V)\ge\EE\log S(V)=-1$; apply the same argument to $-V$.
Moving the threshold from $\EE V$ to zero and using the density bound
in~\eqref{eq:inputs}, we obtain
\[
 \de(P)\ge\min\{\PP(V<0),\PP(V>0)\}
 \ge\frac1e-\sqrt2\,\frac{\Delta(u,v)}{d(u,v)}
 =\frac1e-o(1),
\]
a contradiction.
\end{proof}

\section{Proof of Theorem~\ref{thm:small-gaps}}\label{sec:small-gap-proof}

\begin{proof}
It suffices to treat $0<\ep<\f{1}{2(t+1)}$. Fix $k\ge2$ and $t\ge1$.
We prove that there is $R_*=R_*(k,t,\ep)$ such that the second
alternative holds for $P_0$ with $G(P_0)<R(P_0)$, $R(P_0)\ge R_*$, and
no $\ep$-pseudo antichain of size $k$. 

\textbf{Stage 1: Normalize coordinates and count them locally.}
Writing $m=|P_0|$, form $P=C_{m^2}\op P_0\op C_{m^2}$ and put
$n=|P|=2m^2+m$. Clearly $w(P)=w(P_0)$, $G(P)=G(P_0)$, and $P$ has no $\ep$-pseudo antichain.

For distinct $u,v\in P$, put $D=f(v)-f(u)$. Conditional on the extension,
the rank-unit difference $Z_v-Z_u$ has mean $D$ and variance
$|D|(n+1-|D|)/(n+2)$. Thus
\begin{equation}\label{cp:eq:rankvariance}
 d(u,v)^2=\Var(D)+\EE\f{|D|(n+1-|D|)}{n+2}\le\Var(D)+\EE|D|.
\end{equation}
For $u,v \in P_0$, the law of $D$ and its mean are unchanged by padding,
while the conditional variance of $Z_u-Z_v$ at $s=|D|$ is $s-s(s+1)/(n+2)$, which increases with $n$. Thus $d_P(u,v)\ge d_{P_0}(u,v)$ and $\De_P(u,v) =\De_{P_0}(u,v)$, so $R:=R(P)\ge R(P_0)$. Hence it suffices to find an $\ep$-pseudo insertor in $P$.

For $u,v \nin P_0$, $D$ is a fixed nonzero integer, so $d(u,v)^2<|D|\le D^2=\De^2(u,v)$ and the pair cannot enter the maximum defining $R$. For $u,v\in P_0$, $|D|\le m$ gives $d(u,v)^2\le m^2+m$. For $u\in P_0,v\nin P_0$, $D$ has constant sign and ranges over an interval of at most $m$ values, so if this pair enters the maximum, then $\EE|D|=|\EE D|\le d(u,v)$, so $d(u,v)^2\le m^2+d(u,v)$ and $d(u,v)\le m+1$. Consequently $R\le m+1$, and a pair attaining $R>1$ has a vertex in $P_0$.

Choose a pair $x,y$ attaining $R$, with $x\in P_0$; then $\De(x,y)\le d(x,y)=R$. For each vertex $z\in P$, put
\begin{equation}\label{cp:eq:finite-coordinates}
 A_z=\f{Z_z-Z_x}{R}=b_z+W_z,\qquad
 b_z=\f{h(z)-h(x)}R,\qquad \EE W_z=0.
\end{equation}
We think of $b_z$ as the mean height of $z$ relative to the anchor $x$, and $W_z$ as its ``centered fluctuation,'' both in units of $R$. Write
\[
 d_0(s,t)=\|W_s-W_t\|_2=\f{d(s,t)}R,\qquad
 \ell_0(s,t)=|b_s-b_t|+d_0(s,t).
\]
In particular $b_x=W_x=0$, $|b_y|\le1$, and $d_0(x,y)=1$. The joint law of $(W_z)_{z\in P}$ is log-concave, since log-concavity is preserved by the affine maps.

Since $G(P)<R$, Lemma~\ref{cp:lem:scale} with $M=R$ gives
\begin{equation}\label{cp:fin:variance}
 \EE W_z^2=\f{d(z,x)^2}{R^2}
 \le 4\left(1+\f{\Delta(z,x)}R\right)=4(1+|b_z|).
\end{equation}
Similarly,
\begin{equation}\label{cp:fin:distance}
 d_0(s,t)=\f{d(s,t)}R
 \le\max\left\{1,\f{\Delta(s,t)}R\right\}
 =\max\{1,|b_s-b_t|\}.
\end{equation}
The metric $\ell_0$ has a uniform doubling constant $N_{k,\ep}$ by Lemma~\ref{cp:lem:doubling}. 

For an interval $I$ (with each endpoint either included or excluded), write $|I|$ for its length and put
$$N(w;I)=\f1R\#\{z:b_z+w_z\in I\}.$$
By Lemma~\ref{cp:lem:scale}, every interval $I$ of length $1$ contains at most $9R$ of the mean heights $b_z$. We show that, for every fixed $T,\ze,q>0$, for sufficiently large $R$,
\begin{equation}\label{cp:fin:counts}
 \PP\left(\sup_{I\se[-T,T]}|N(W;I)-|I||>\ze\right)<q.
\end{equation}
\bpr{Proof of~\eqref{cp:fin:counts}} Conditional on the linear extension $f$, let $f(x)=r$. Exchangeability of the $n+1$ uniform spacings gives, for fixed $u>0$ and sufficiently large $R$,
\[
 \#\{z:0<A_z\le u\}\ \stackrel{\mathrm{law}}=\
 \min\{\operatorname{Bin}(n,uR/(n+1)),\,n-r\}.
\]
Indeed, consecutive spacings to the right of $U_{(r)}$ have the law
of an initial segment of the spacings starting at zero; their partial
sums count uniform sample points, stopped after $n-r$ points.
Since $x\in P_0$ and $R\le m+1$,
\[
 \min\{r-1,n-r\}\ge m^2,\qquad m\ge R-1\ge R/2\quad(R\ge2).
\]
The probability of truncation is at most $uR/m^2\le4u/R$ by
Markov's inequality. The binomial mean differs from $uR$ by
$uR/(n+1)$, and its variance is at most $uR$. Once $u/(n+1)\le\zeta/2$,
Chebyshev's inequality therefore gives
\[
 \PP\left(\left|\f1R\#\{z:0<A_z\le u\}-u\right|>\zeta\right)
 \le\f{4u}{R}+\f{4u}{\zeta^2R}.
\]
The left side of the anchor is identical, with truncation at $r-1$;
the anchor contributes only $1/R$. Subtraction gives the assertion
for each fixed interval. For uniformity, choose a finite grid in
$[-T,T]$ of mesh at most $\zeta/16$, including both endpoints.
The union bound makes every grid-interval error at most $\zeta/2$
outside an event of probability $q$. Any interval is sandwiched
between grid intervals whose lengths differ from its length by at
most four mesh widths; the inner interval may be empty. Monotonicity
of counts now gives an error at most $\zeta/2+\zeta/4<\zeta$.
Open and closed endpoint conventions are covered by the same
sandwich, including the anchor. This proves~\eqref{cp:fin:counts}.\epr

\textbf{Stage 2: Choose the reference vertex.}
Fix $H>0$ and $0<\eta<1$, and put $s=\eta/2$ and
$I_h=(h-s,h+s]$. We choose a vertex $p$ near height zero and finitely
many measurements that distinguish vertices far from $p$. Stage 3
will find a vertex close to $p$ in each $I_h$, $|h|\le H$.

For $g\in\mathcal G:=\{0\le g\le2,\ \EE g=1\}$, put
$u_g(z)=\EE[gW_z]$. Since $\EE W_z=0$ and $\|g-1\|_2\le1$,
Cauchy--Schwarz and~\eqref{cp:fin:variance} give
\begin{equation}\label{cp:fin:weighted-growth}
 |u_g(z)|=|\EE[(g-1)W_z]|
 \le\|g-1\|_2\|W_z\|_2\le2\sqrt{1+|b_z|}.
\end{equation}
Likewise,
\begin{equation}\label{cp:fin:weighted-lipschitz}
 |u_g(v)-u_g(w)|
 =|\EE[(g-1)(W_v-W_w)]|
 \le\|g-1\|_2\|W_v-W_w\|_2\le d_0(v,w).
\end{equation}

For centered log-concave $V$, H\"older's inequality and
\eqref{eq:logconcave-tails} give
\[
 \EE|V|^2\le(\EE|V|)^{2/3}(\EE|V|^4)^{1/3}
 \le(24e)^{1/3}(\EE|V|)^{2/3}(\EE|V|^2)^{2/3}.
\]
Thus $\EE|V|\ge c_0\|V\|_2$, where $c_0=(24e)^{-1/2}$;
the zero-variance case is immediate. Put $r_0=c_0\eta/4$.
Choose an $\ell_0$-net $v_1,\ldots,v_q$ at resolution $r_0$ for
$|b_z|\le H+1$. By~\eqref{cp:fin:distance}, this window lies within
$\ell_0$-distance $2(H+1)$ of $x$. Doubling therefore gives
$q\le Q=Q(k,\ep,H,\eta)$, with $Q$ a fixed positive integer.

Let $\mu_0$ be uniform on $\{z:b_z\in I_0\}$, which contains $x$.
Doubling covers this set by at most $K_0=K_0(k,\ep,\eta)$ $\ell_0$-balls of radius
$r_0/2$. Assign each vertex to one containing ball. The resulting
clusters have $d_0$-diameter at most $r_0$, and a largest cluster
$\mathcal C_*$ has $\mu_0$-mass at least $\alpha_0:=1/K_0$.
Choose $p\in\mathcal C_*$. In particular, $|b_p|\le s<\eta$.

For each representative $v_i$, set
$$g_i=1+\tfrac12(\operatorname{sgn}(W_{v_i}-W_p)-\EE\operatorname{sgn}(W_{v_i}-W_p)),\wsp t_i(z)=u_{g_i}(z)-u_{g_i}(p),$$

where $\operatorname{sgn}(0)=0$. Since the sign lies in $[-1,1]$,
$0\le g_i\le2$ and $\EE g_i=1$. Centering $W_z-W_p$ gives
\[
 t_i(z)=\tfrac12\EE[\operatorname{sgn}(W_{v_i}-W_p)(W_z-W_p)],
 \qquad |t_i(z)|\le\tfrac12 d_0(z,p).
\]
At $v_i$, $t_i(v_i)=\frac12\EE|W_{v_i}-W_p|\ge\frac{c_0}{2}d_0(v_i,p)$;
moving to $z$ changes this value by at most $d_0(z,v_i)/2$.
If $|b_z|\le H+1$ and $d_0(z,p)\ge\eta$, choose $v_i$ with
$d_0(z,v_i)\le\ell_0(z,v_i)\le r_0$. Then
\[
 t_i(z)\ge\tfrac12\bigl(c_0d_0(v_i,p)-d_0(z,v_i)\bigr)
 \ge\tfrac12\bigl(c_0(\eta-r_0)-r_0\bigr)\ge r_0,
\]
because $c_0\eta=4r_0$ and $c_0\le1$. Consequently
\begin{equation}\label{cp:fin:separation}
 \begin{gathered}
 z\in\mathcal C_*\ \Longrightarrow\ \max_i|t_i(z)|\le r_0/2,
 \qquad \mu_0(\mathcal C_*)\ge\alpha_0,\\
 d_0(z,p)\ge\eta\ \Longrightarrow\ \max_i t_i(z)\ge r_0
 \qquad(|b_z|\le H+1).
 \end{gathered}
\end{equation}
Thus a vertex with $b_z\in I_h$ and all $|t_i(z)|<r_0$ is close enough to $p$.
We prove its existence by counting crossings and then coloring vertices.

\textbf{Stage 3: Weak uniformity of interval counts.} We prove that for every fixed $T,\zeta>0$, for sufficiently large $R$, each $g\in \mathcal G$ and $I\se [-T, T]$ satisfies
\begin{equation}\label{cp:fin:mean-density}
\ll|N(u_g;I)-|I|\rr|\le\ze.
\end{equation}
This includes $N(0;I)$, since the constant weight $g=1$ has $u_1=0$.
The first step is to find a configuration close to $u_g$ where the
random interval counts hold.

Rescale coordinate $z$ by $1+|b_z|$ and use the norm
$$\cpnorm w_*^4=\f1R\sum_z\f{|w_z|^4}{(1+|b_z|)^4}.$$

The fourth-moment bound following~\eqref{eq:logconcave-tails}, together
with~\eqref{cp:fin:variance}, gives
$$\EE|W_z|^4\le24e(\EE W_z^2)^2\le384e(1+|b_z|)^2.$$

For each integer $j\ge0$, at most $18R$ vertices satisfy
$j\le|b_z|<j+1$, counting multiplicities of equal mean heights.
Summing over these bands gives the two bounds needed below:
\[
 \begin{aligned}
 \f1R\sum_z(1+|b_z|)^{-2}
 &\le18\sum_{j\ge0}(j+1)^{-2}
 \le18\left(1+\int_1^\infty t^{-2}\,dt\right)=36,\\
 \f1R\sum_z(1+|b_z|)^{-4}
 &\le18\sum_{j\ge0}(j+1)^{-4}
 \le18\left(1+\int_1^\infty t^{-4}\,dt\right)=24.
 \end{aligned}
\]
Thus linearity of expectation gives
\begin{equation}\label{cp:fin:fourth-moment}
 \EE\cpnorm W_*^4\le\f{384e}{R}\sum_z(1+|b_z|)^{-2}
 \le384e\cdot36=13824e.
\end{equation}
For $T\ge1$, the same band count gives
\begin{equation}\label{cp:fin:fourth-tail}
 \f1R\sum_{|b_z|>T}\f{\EE|W_z|^4}{(1+|b_z|)^4}
 \le6912e\sum_{j\ge\lfloor T\rfloor}(j+1)^{-2}
 \le\f{6912e}{\lfloor T\rfloor}\le\f{13824e}{T}.
\end{equation}
Here the tail sum is at most $\int_{\lfloor T\rfloor}^\infty t^{-2}\,dt$
and $\lfloor T\rfloor\ge T/2$.

For each $r>0$ and $0<\alpha<1$, we claim that uniformly boundedly many
$\cpnorm{\cdot}_*$-balls of radius $r$ cover probability at least
$1-\alpha$; only their centers depend on the poset.

For $T\ge1$, \eqref{cp:fin:distance} places the vertices with $|b_z|\le T$
within $\ell_0$-distance $2T$ of $x$. Doubling gives an
$\ell_0$-net at resolution $\xi$ with at most
$K_1=K_1(T,\xi,k,\ep)$ representatives in this height window. Choose $\rh(z)$ with
$\ell_0(z,\rh(z))\le\xi$.
Define $V_z=W_{\rh(z)}$ on this height window and $V_z=0$ outside it.
Since $W_z-W_{\rh(z)}$ is centered and log-concave,
\[
 \EE|W_z-W_{\rh(z)}|^4
 \le24e\,d_0(z,\rh(z))^4\le24e\,\xi^4.
\]
The second band sum bounds the contribution inside the window;
\eqref{cp:fin:fourth-tail} bounds the contribution outside. Thus
\[
 \EE\cpnorm{W-V}_*^4
 \le576e\,\xi^4+\f{13824e}{T}.
\]
Set
\[
 \theta=\f{\alpha r^4}{324},\qquad
 T=1+\f{27648e}{\theta},\qquad
 \xi=\left(\f{\theta}{1152e}\right)^{1/4}.
\]
The last expectation is less than $\theta$, so Markov's inequality
gives $\PP(\cpnorm{W-V}_*\ge r/3)<81\theta/r^4=\alpha/4$.
Each representative has variance at most $4(1+T)$ by
\eqref{cp:fin:variance}. Take
$M=\sqrt{8K_1(1+T)/\alpha}$ and $\delta=r/8$.
Chebyshev's inequality and the union bound give probability at most
$4K_1(1+T)/M^2=\alpha/2$ that any representative lies outside $[-M,M]$.
When all representative values lie in $[-M,M]$, round each to a
multiple of $\delta$ with error at most $\delta$, and assign its
rounded value to the same vertices as before, producing $\widetilde V$.
Keep $\widetilde V_z=0$ outside the window. Then
\[
 \cpnorm{V-\widetilde V}_*^4
 \le \delta^4\,\f1R\sum_z(1+|b_z|)^{-4}
 \le24(r/8)^4<(r/3)^4.
\]
There are at most $(2\lceil M/\delta\rceil+3)^{K_1}$ rounded vectors.
Except on an event of probability less than $\alpha/4+\alpha/2<\alpha$,
one is within $2r/3$ of $W$, proving the cover claim.

We next show that
\begin{equation}\label{cp:fin:mean-ball}
\text{for each }r>0\text{ there is }\tau>0\text{ such that }\PP(\cpnorm{W-u_g}_*<r)\ge\tau\text{ for all }g\in\mathcal G.
\end{equation}

\bpr[Proof of~\eqref{cp:fin:mean-ball}]
Put $B_*=(13824e)^{1/4}$ and
$\alpha=\min\{1/16,r^2/(6400B_*^2)\}$.
The cover gives at most $K=K(r,\alpha,k,\ep)$ closed balls of
radius $r/4$ covering probability at least $1-\alpha$.
Discard balls of probability below $\alpha/K$. Their union
has probability at most $\alpha$, so the retained balls cover
probability at least $1-2\alpha$.

Define
\[
 B=\{w:\cpnorm w_*\le r/2\},\qquad
 \Phi(c)=\PP(W\in c+B),\qquad A=\{c:\Phi(c)\ge\alpha/K\}.
\]
If $w$ belongs to a retained ball, that entire ball lies in $w+B$.
Thus $w\in A$ and $\PP(W\in A)\ge1-2\alpha$.

Let $f_W$ be the density of $W$ on its affine support $S$. Then
\[
 \Phi(c)=\int_S f_W(w)\mathbf1_B(w-c)\,dw.
\]
The integrand is log-concave jointly in $(w,c)$, because $f_W$ is
log-concave and $\{(w,c):w-c\in B\}$ is convex. Integrating out $w$
preserves log-concavity by \cite[Theorem 5.1]{LV}. Thus $\log\Phi$
is concave wherever $\Phi>0$.

Fix $g\in\mathcal G$. Put $D=\{W\notin A\}$,
$\delta=\EE[g\mathbf1_D]\le2\PP(D)\le4\alpha$, and
$m=1-\delta\ge1/2$, and define
\[
 v=\frac{\EE[gW\mathbf1_{D^c}]}m.
\]
Jensen's inequality for $\log\Phi$, under the probability measure
with density $g\mathbf1_{D^c}/m$, gives
\[
 \log\Phi(v)\ge\f1m\EE[g\mathbf1_{D^c}\log\Phi(W)]
 \ge\log(\alpha/K).
\]
Thus $\Phi(v)\ge\alpha/K$.

The fourth-moment bound \eqref{cp:fin:fourth-moment} gives
$\EE\cpnorm W_*^2\le B_*^2$ and $\EE\cpnorm W_*\le B_*$.
Consequently $\cpnorm v_*\le2B_*/m\le4B_*$.
Since $u_g=mv+\EE[gW\mathbf1_D]$, Cauchy--Schwarz gives
\[
 \begin{aligned}
 \cpnorm{u_g-v}_*
 &\le\delta\cpnorm v_*+\EE[g\cpnorm W_*\mathbf1_D]\\
 &\le16B_*\alpha+
  2(\EE\cpnorm W_*^2)^{1/2}\PP(D)^{1/2}\\
 &\le16B_*\alpha+2B_*\sqrt{2\alpha}
 \le20B_*\sqrt\alpha\le r/4<r/2.
 \end{aligned}
\]
Here we used $\alpha<1$ and $\alpha\le r^2/(6400B_*^2)$.
Hence $\cpnorm{W-v}_*\le r/2$ implies $\cpnorm{W-u_g}_*<r$,
and
\[
 \PP(\cpnorm{W-u_g}_*<r)\ge\Phi(v)\ge\alpha/K.
\]
This proves~\eqref{cp:fin:mean-ball} with $\tau=\alpha/K$.\epr

\bpr[Proof of~\eqref{cp:fin:mean-density}] Fix $T,\zeta>0$ and put $B=64+4(T+1)$. For $u>B$,
$2\sqrt{1+u}\le u/3$ and $T+1\le u/4$, so
\[
 u-2\sqrt{1+u}-(T+1)\ge5u/12\ge(1+u)/4.
\]
By~\eqref{cp:fin:weighted-growth}, a vertex with $|b_z|>B$ cannot
have $b_z+u_g(z)\in[-T-1,T+1]$.
If $b_z+w_z$ lies there, $e_z:=w_z-u_g(z)$ must satisfy
$|e_z|\ge(1+|b_z|)/4$. For $0<\delta<1$, the normalized number of
these far vertices, together with those having $|b_z|\le B$ and
$|e_z|>\delta$, is at most
\[
 \left(256+\f{(1+B)^4}{\delta^4}\right)\cpnorm e_*^4.
\]
Indeed, each far vertex contributes at least $1/(256R)$ to the
fourth power of the norm, and each of the other exceptional vertices
contributes more than $\delta^4/[R(1+B)^4]$.

Set
\[
 \delta=\min\{1/2,\zeta/16\},\qquad
 r=\left(\f{\zeta}{4(256+(1+B)^4/\delta^4)}\right)^{1/4}.
\]
The exceptional count is then at most $\zeta/4$ whenever $\cpnorm e_*<r$.
Obtain $\tau>0$ from~\eqref{cp:fin:mean-ball}. By~\eqref{cp:fin:counts},
the event that every interval in $[-T-1,T+1]$ has counting error at
most $\zeta/4$ has probability greater than $1-\tau/2$ for large $R$.
For each $g$, it therefore meets $\{\cpnorm{W-u_g}_*<r\}$, whose
probability is at least $\tau$. Choose $w$ in the intersection.

For $I\subseteq[-T,T]$, enlarge each endpoint by $\delta$ to obtain
$I^+$, and shrink by $\delta$ to obtain $I^-$; use closed outer and
open inner intervals, with $I^-=\varnothing$ if necessary.
Every nonexceptional vertex contributing to these counts has
$|b_z|\le B$ and displacement at most $\delta$. Hence
\[
 N(w;I^-)-\zeta/4\le N(u_g;I)\le N(w;I^+)+\zeta/4.
\]
The interval-length error is at most $2\delta$ on either side
(if $I^-$ is empty, $|I|\le2\delta$). Hence
$|N(u_g;I)-|I||\le2\delta+\zeta/2\le5\zeta/8<\zeta$, proving~\eqref{cp:fin:mean-density}
\epr

\textbf{Stage 4: Uniform boundedness of the weighted means.}
Assign each $z\in P$ a (deterministic) ``starting position'' $x_z$ and
``velocity'' $v_z$, so that its position at time $t$ is $x_z+t v_z$.
Assume $|v_z|\le M$ with $M>0$, fix weights $0\le w_z\le1$,
and let $\tau>0$. Suppose, for some $c\ge0$, that the normalized
weighted counts satisfy

\begin{equation}\label{cp:cross:hypothesis}
 \left|\f1R\sum_z w_z\mathbf1_{\{x_z+t v_z\in I\}}-c|I|\right|
 \le\ep\qquad(t\in\{0,\tau\})
\end{equation}
for every interval $I$ in some fixed window. Put $E=M(4c\tau M+2\ep)$. We show that, for intervals $I$, $J$ whose $\tau M$-neighborhoods lie within that window, 
\begin{equation}\label{cp:cross:comparison}
 \left|\f1R\sum_{x_z\in I}w_zv_z
       -\f{|I|}{|J|}\f1R\sum_{x_z\in J}w_zv_z\right|
 \le E\left(1+\f{|I|}{|J|}\right)
       +\f{2\varepsilon|I|}{\tau}.
\end{equation}
\bpr[Proof of~\eqref{cp:cross:comparison}]
For each point, count movements during $[0,\tau]$ from at or below
it to above it with weight $w_z/R$, and movements in the reverse
direction with weight $-w_z/R$.
For $a<b$ in the window, the signed crossing count at $b$ minus
that at $a$ is
\[
 \f1R\sum_z w_z
 \bigl(\mathbf1_{\{x_z\in(a,b]\}}
       -\mathbf1_{\{x_z+\tau v_z\in(a,b]\}}\bigr).
\]
Each of these two interval counts differs from $c(b-a)$ by at most
$\varepsilon$. Thus signed crossing counts at any two points
differ by at most $2\varepsilon$.

For a positive-length interval $I$, partition it at all trajectory
endpoints $x_z,x_z+\tau v_z$ lying inside it. The signed crossing
count is constant on each open piece. Average these counts with
weights equal to the piece lengths divided by $|I|$.
Exchanging the finite sums over pieces and vertices shows that this
average equals the normalized weighted sum of signed trajectory
lengths inside $I$, divided by $|I|$.
To that sum, a trajectory wholly inside contributes exactly
$\tau w_zv_z/R$. Discrepancies from counting the full trajectory of
each vertex initially in $I$ can arise only within $\tau M$ of the
endpoints of $I$. The two endpoint strips have total normalized weight at
most $4c\tau M+2\varepsilon$, and each trajectory has length at most
$\tau M$. 

The average crossing count of $I$ therefore differs from
$\frac{\tau}{R|I|}\sum_{x_z\in I}w_zv_z$ by at most $\tau E/|I|$.
Vertices at endpoints are included in these strips, so either
endpoint convention is allowed. The averages for any two intervals
differ by at most $2\varepsilon$, since all the signed crossing counts do.
For positive-length intervals $I,J$ whose $\tau M$-neighborhoods lie
in that window, the triangle inequality gives a total error
$\tau E/|I|+2\varepsilon+\tau E/|J|$ between the corresponding
velocity sums scaled by $\tau/(R|I|)$ and $\tau/(R|J|)$.
Multiplying by $|I|/\tau$ gives \eqref{cp:cross:comparison}.\epr

Now for every fixed $T>0$, we claim that, for sufficiently large $R$,
\begin{equation}\label{cp:fin:bounded}
 g\in\mathcal G,\ |b_z|\le T \imrsp |u_g(z)|\le3.
\end{equation}
\bpr[Proof of~\eqref{cp:fin:bounded}] Put $T_1=T+3$. We shall apply \eqref{cp:cross:comparison} to $\{z\in P:|b_z|\le 68+4T_1\}$.
By~\eqref{cp:fin:weighted-growth}, discarded vertices cannot
enter $[-T_1,T_1]$ under $b_z+\lambda u_g(z)$, $0\le\lambda\le1$.
On the retained set take 
$$x_z=b_z,\wsp v_z=u_g(z),\wsp w_z=1,\wsp M=2\s{69+4T_1}.$$
The weight $1+\la(g-1)$ belongs to
$\mathcal G$, so~\eqref{cp:fin:mean-density} supplies~\eqref{cp:cross:hypothesis} with $c=1$ and any prescribed $\ep>0$.

Write $J_h=(h-1/4,h+1/4]$.
Set $\tau=1/(128M^2)$ and $\varepsilon=\tau/32$.
Since $M\ge1$, we have $\tau\le1$, $\tau M\le1$,
$4M\le1/\tau$, and $\varepsilon\le1/8$.
For $|h|\le T$, the $\tau M$-neighborhoods of $J_h$ and $J_0$ lie
in $[-T_1,T_1]$. Apply~\eqref{cp:cross:comparison} with
$I=J_h$ and $J=J_0$, both of length $1/2$, to obtain
$$\ll|\f1R\sum_{b_z\in J_h}u_g(z)
       -\f1R\sum_{b_z\in J_0}u_g(z)\rr|\le8M^2\tau+(4M+1/\tau)\ep\le\f1{16}+\f{2\ep}{\tau}=\f18.
$$

Since $u_g(x)=0$, \eqref{cp:fin:distance} and
\eqref{cp:fin:weighted-lipschitz} give $|u_g(z)|\le1$ on $J_0$.
Thus the normalized sum over $J_0$ has absolute value at most
$N(0;J_0)\le1/2+\ep$, and the sum over $J_h$ has absolute
value at most $1/2+\ep+1/8$.
For a vertex $z$ with $|b_z|\le T$,
every vertex in $J_{b_z}$ has measurement within $1$ of $u_g(z)$,
again by these two inequalities. Since
$N(0;J_{b_z})\ge1/2-\ep>0$, it follows that

$$|u_g(z)|\le1+\f{\ll|\tfrac{1}{R}\sum_{b_y\in J_{b_z}}u_g(y)\rr|}{N(0;J_{b_z})}\le1+\f{1/2+\ep+1/8}{1/2-\ep}\le1+\f{3/4}{3/8}=3.$$
This proves~\eqref{cp:fin:bounded}.\epr

\textbf{Stage 5: Use finite coin flips to detect copies.} 

Fix $B>0$ (to be defined below, and put, for $z\in P$ with $|b_z|\le B$ and $1\le i\le q$,
$$p_i(z):=\f{u_{g_i}(z)+3}{6}$$

For fixed weights $0\le w_z\le1$ and $a\in \RR^q$, put
\[
 x_z(a)=b_z+\sum_{i=1}^q a_i p_i(z),\qquad
 N_w(a;I)=\f1R\sum_z w_z\mathbf1_{\{x_z(a)\in I\}}.
\]
where the second sum runs only over $z$ for which $x_z(a)$ is defined.

Suppose, for $0<A\le1$, $0\le c\le2$, $0<\varepsilon\le1$, and
$L\ge H_1\ge1$, that
\[
 |N_w(a;I)-c|I||\le\varepsilon
 \quad(\|a\|_1\le A,\ I\subseteq[-L-3,L+3]).
\]
Assume also that $N_1(0;I)\le|I|+1$ in this window; this will follow
from the initial unweighted counting estimate.
Take $0<\tau<A$, and for fixed $i$ replace $w_z$ by either
$w'_z=w_zp_i(z)$ or $w'_z=w_z(1-p_i(z))$. By~\eqref{cp:fin:bounded}, for sufficiently large $R$, $p_i(z)\in [0,1]$. 

For $\|a\|_1\le A-\tau$, if $e_i$ is the indicator vector for the $i$th coordinate, moving with velocity $p_i$ changes $a$ to
$a+\tau e_i$; moving with velocity $1-p_i$ changes it to
$a-\tau e_i$ and translates all positions by $\tau$.
Both movements satisfy the assumed count bound. The extra margin
of $3$ covers these translations and the endpoint strips. 

Apply~\eqref{cp:cross:comparison} with $M=1$, $J=(-L,L]$, and
$I\subseteq[-H_1,H_1]$. Here $E\le8\tau+2\varepsilon$.
Set $c'=N_{w'}(0;J)/(2L)$. Because $w'\le1$ and $L\ge1$,
$0\le c'\le(2L+1)/(2L)\le2$.
Also $|x_z(a)-b_z|\le\|a\|_1\le1$; changing $a$ to zero can
alter membership in $J$ only in its two endpoint strips of length
$2$. Since $w'\le w$, their total normalized weight is at most
$4c+2\varepsilon$. Hence
$$|N_{w'}(a;I)-c'|I||\le(8\tau+2\ep)\ll(1+\f{H_1}{L}\rr)+\f{4H_1\ep}{\tau}+\f{H_1}{L}(4c+2\ep)\le16\tau+4\ep+\f{4H_1\ep}{\tau}+\f{10H_1}{L}.$$

The last inequality uses $H_1/L\le1$, $c\le2$, and $\ep\le1$.
Thus the new counts are uniform, with a density $c'$ independent of
the movement parameter $a$.

For any fixed number $D\ge1$ of rounds, target window $[-H_D,H_D]$
with $H_D\ge1$, and error $0<\varepsilon_D\le1$, the preceding estimate can
be iterated uniformly over every sequence of color choices.
Working backwards for $j=D,D-1,\ldots,1$, define
\[
 \tau_j=\f{\varepsilon_j}{64D},\qquad
 L_j=\f{40H_j}{\varepsilon_j},\qquad
 H_{j-1}=L_j+3,\qquad
 \varepsilon_{j-1}=\f{\varepsilon_j\tau_j}{16H_j}.
\]
These definitions preserve $H_j\ge1$ and $0<\varepsilon_j\le1$,
so $\tau_j\le1/(64D)\le H_j$. The one-coloring error is at most
$\varepsilon_j$, because
\[
 \begin{gathered}
 16\tau_j=\f{\varepsilon_j}{4D}\le\f{\varepsilon_j}{4},\qquad
 \f{10H_j}{L_j}=\f{\varepsilon_j}{4},\\
 \left(4+\f{4H_j}{\tau_j}\right)\varepsilon_{j-1}
 =\f{\varepsilon_j}{4}\left(1+\f{\tau_j}{H_j}\right)
 \le\f{\varepsilon_j}{2}.
 \end{gathered}
\]
Start with $w=1$, density $c=1$, and counting
error $\varepsilon_0$ on $[-H_0,H_0]$ for $\|a\|_1\le1$.
After round $j$, the allowed parameter radius is
$1-\sum_{r=1}^j\tau_r\ge1-D/(64D)=63/64$; in particular, every
step has $\tau_j<A$. Each new density lies in $[0,2]$,
as just proved; the required unweighted upper bound holds throughout
because all reference windows lie in the initial window and
$\varepsilon_0\le1$. Induction therefore gives, for every resulting
weight $w$ and some $c_w\in[0,2]$,
\begin{equation}\label{cp:cross:colors}
 |N_w(0;I)-c_w|I||\le\varepsilon_D
 \qquad(I\subseteq[-H_D,H_D]).
\end{equation}

Let $\alpha=s\alpha_0$, set
\[
 \gamma=\f{\alpha}{8(2s+1)},\qquad
 n_*=\left\lceil\f{144Q}{\gamma r_0^2}\right\rceil,
\]
and take $D=n_*q$, $H_D=H+1$, and
$\varepsilon_D=\alpha/(8\cdot2^{n_*Q})$ in the finite construction
above.

Let $H_0$ and $\varepsilon_0$ be the resulting initial parameters, and let $B=64+4(H_0+2)$.

For $\|a\|_1\le1$, the weight
$g_a=1+\frac16\sum_i a_i(g_i-1)$ lies in $\mathcal G$, because
$|g_a-1|\le1/6$ and $\EE g_a=1$. Moreover,
$$b_z+\sum_i a_i p_i(z)=b_z+u_{g_a}(z)+\tfrac12\sum_i a_i.$$
The last translation has magnitude at most $1/2$. The cutoff
calculation in the proof of~\eqref{cp:fin:mean-density}, with
$T=H_0+1$, excludes discarded vertices even after this translation.
Thus
\eqref{cp:fin:mean-density}, applied on $[-H_0-1,H_0+1]$ with error
$\varepsilon_0$, supplies all the initial counting hypotheses for
\eqref{cp:cross:colors}.

For each vertex, flip $n_*$ coins of success probability $p_i(z)$ for
each $i$, all $n_*q$ coins independent. Let $\widehat p_i(z)$ be the success frequency
and put $\widehat u_i(z)=6\widehat p_i(z)-3$. The expectation of
$\widehat u_i(z)$ is $u_{g_i}(z)$, and
\[
 \Var(\widehat u_i(z))=36p_i(z)(1-p_i(z))/n_*\le9/n_*.
\]
Chebyshev's inequality and the union bound give
\[
 \PP\left(\max_i|\widehat u_i(z)-u_{g_i}(z)|\ge r_0/4\right)
 \le\f{144q}{n_*r_0^2}\le\gamma.
\]
Accept the outcomes if
$\max_i|\widehat u_i(z)-u_{g_i}(p)|<3r_0/4$.
By~\eqref{cp:fin:separation}, a vertex in $\mathcal C_*$ is accepted
with probability at least $1-\gamma$. A vertex in $|b_z|\le H+1$
with $d_0(z,p)\ge\eta$ is accepted with probability at most $\gamma$:
on the complementary event in the last display, some measured
difference is greater than $r_0-r_0/4=3r_0/4$.

Write $F(h)$ for the expected number of accepted vertices with
$b_z\in I_h$, divided by $R$. The acceptance rule specifies a fixed
subset of the $2^{n_*q}$ outcome sequences, the same for every vertex.
For each sequence, \eqref{cp:cross:colors} compares its expected
counts in $I_h$ and $I_0$ with error at most $2\varepsilon_D$, since
both intervals have length $2s$. Summing gives
\[
 |F(h)-F(0)|\le2^{n_*q}\cdot2\varepsilon_D\le\alpha/4
 \qquad(|h|\le H).
\]
Increase $R$ so that~\eqref{cp:fin:mean-density} also has error at
most $s<1$ on these intervals. Then
$N(0;I_0)\ge s$, so $|\mathcal C_*|/R\ge\alpha_0s=\alpha$,
and $N(0;I_h)\le2s+1$. Since $\gamma\le1/8<1/4$,
\[
 F(h)\ge F(0)-\alpha/4
 \ge(1-\gamma)\alpha-\alpha/4\ge\alpha/2.
\]
If every vertex in $I_h$ had $d_0(z,p)\ge\eta$, the acceptance
bound would instead give
$F(h)\le\gamma N(0;I_h)\le\gamma(2s+1)=\alpha/8$,
a contradiction. We have proved
\begin{equation}\label{cp:eq:replicas}
 \text{for every }h\in[-H,H]\text{ there is }z\in P
 \text{ with }|b_z-h|<\eta,\quad\|W_z-W_p\|_2<\eta.
\end{equation}
Indeed $b_z\in I_h$ gives $|b_z-h|\le s<\eta$.

\textbf{Stage 6: Construct the pseudo insertor.}
Put $H=1+2\s{t+1}$. Choose $\et>0$ sufficiently small (in terms of $t,\ep$) as specified below. Apply stages 2--5 with these parameters to obtain a vertex $p$ and the copies guaranteed by~\eqref{cp:eq:replicas}. Since $d_0(x,y)=1$, one $v\in\{x,y\}$ satisfies $d_0(v,p)\ge1/2$. Put $V=A_v-W_p$. Then~\eqref{cp:fin:distance} gives
$$
 \EE V=b_v,\qquad |b_v|\le1,\qquad
 1/2\le\sd(V)=d_0(v,p)\le\max\{1,|b_v-b_p|\}\le1+\et.
$$
Its nondegenerate log-concave law has a continuous distribution function.
By~\eqref{eq:inputs}, its density is bounded by
$\sqrt2/\sd(V)\le2\sqrt2$; put $K=2\sqrt2$.

Now for $1\le i\le t$, choose the quantile $q_i$ with
$\PP(V<q_i)=i/(t+1)$. Chebyshev's inequality and \eqref{eq:inputs} give
$$
 |q_i|\le1+(1+\et)\s{t+1}<H,\qquad
 q_{i+1}-q_i\ge\f{1}{K(t+1)}\quad(1\le i<t).
$$
Choose $z_i$ from~\eqref{cp:eq:replicas} at height $q_i$. Thus
$$
 A_v=W_p+V,\qquad A_{z_i}=W_p+q_i+e_i,
 \qquad e_i=b_{z_i}-q_i+W_{z_i}-W_p.
$$
$$\EE e_i^2=(b_{z_i}-q_i)^2+\|W_{z_i}-W_p\|_2^2\le2\et^2$$
The $z_i$'s are distinct if $2\et<\f{1}{K(t+1)}$, since
$|b_{z_i}-q_i|<\et$. Also $z_i\ne v$, since
$d_0(z_i,p)<\et<1/2\le d_0(v,p)$.

Put $Z=\{v,z_1,\ldots,z_t\}$. Couple its actual order to the ideal
order obtained by inserting $V$
among $q_1<\cds<q_t$; the latter has law $\I_t$.
If $h=\sqrt\et<\f{1}{2K(t+1)}$, the two orders agree whenever
$$
 \max_i|e_i|\le h\qquad\text{and}\qquad
 \min_i|V-q_i|>h.
$$
Indeed, the leaves remain in threshold order and each comparison with $v$ keeps its sign. The union bound, Chebyshev's inequality, and \eqref{eq:inputs} therefore give
\begin{equation}\label{eq:insertion-tv}
 \TV\bigl(\L_{P;Z},\I_t\bigr)
 \le \sum_{i=1}^t \PP(|e_i|>h)+\sum_{i=1}^t \PP(|V-q_i|\le h)
 \le\frac{2t\et^2}{h^2}+2Kth
 =2t\et+2Kt\sqrt\et.
\end{equation}
Note that this does not require independence between $V$ and the errors. Hence, for sufficiently small $\et>0$, we have
$$\TV\ll(\L_{P_0;Z},\I_t\rr)<\ep.$$
This proves the second alternative and completes the proof.
\end{proof}

\textit{Remark.} By calculating the dependence on $w(P)$ in the proof of Theorem~\ref{cor:kahn-saks}, it can be shown that
\begin{equation}
\de(P)\ge 1/2-O((\log\log\log\log\log w(P))^{-1/4})
\end{equation}
No attempt has been made at optimizing this.

\section*{AI Usage} ChatGPT 6 Astra was used primarily to aid in the discovery of the results here, and also lightly in the preparation of this manuscript. As usual, the author takes full responsibility for the contents of the paper.

\section*{Acknowledgements}
I would especially like to thank Swee Hong Chan, Greta Panova, and Igor Pak for discussions and invaluable input.

\end{document}